\documentclass[11pt]{amsart}

\usepackage[margin=1in]{geometry}
\usepackage{amsmath,amssymb,amsthm}
\usepackage{graphicx}
\usepackage{microtype}
\usepackage{subcaption}
\usepackage{placeins}
\usepackage{xcolor}
\usepackage{hyperref}

\hypersetup{
	colorlinks=true,
	linkcolor=blue!55!black,
	citecolor=blue!55!black,
	urlcolor=blue!55!black,
	pdftitle={Anticoncentration of the Permanent in Ginibre Ensembles},
	pdfauthor={Frederic Koehler and Pui Kuen Leung}
}

\newtheorem{theorem}{Theorem}[section]
\newtheorem{proposition}[theorem]{Proposition}
\newtheorem{lemma}[theorem]{Lemma}
\newtheorem{corollary}[theorem]{Corollary}
\theoremstyle{definition}
\newtheorem{definition}[theorem]{Definition}
\theoremstyle{remark}
\newtheorem{remark}[theorem]{Remark}

\newcommand{\C}{\mathbb C}
\newcommand{\R}{\mathbb R}
\newcommand{\Hh}{\mathbb H}
\newcommand{\F}{\mathbb F}
\newcommand{\E}{\mathbb E}
\newcommand{\Pp}{\mathbb P}
\newcommand{\ii}{\mathrm i}
\newcommand{\dd}{\,\mathrm d}
\DeclareMathOperator{\per}{per}
\DeclareMathOperator{\Sdet}{Sdet}
\DeclareMathOperator{\Cdet}{Cdet}
\DeclareMathOperator{\RePart}{Re}

\title[Anticoncentration of the permanent in Ginibre ensembles]
{Anticoncentration of the Permanent in Ginibre Ensembles}
\author{Frederic Koehler}
\author{Pui Kuen Leung}
\date{}

\begin{document}
	
	\begin{abstract}
		Let $\mathbb K\in\{\R,\C,\Hh\}$, put
		$\beta=\dim_{\R}\mathbb K$, and let $G_n^{\mathbb K}$ be an $n\times n$
		matrix with i.i.d. standard $\mathbb K$-Gaussian entries, a standard
		$\mathbb K$-Ginibre matrix.  We prove that the normalized row-ordered
		permanent
		$W_n^{\mathbb K}=\per_{\mathbb K}G_n^{\mathbb K}/\sqrt{n!}$
		has a radial density $p_n^{\mathbb K}$ satisfying
		\[
		\|p_n^{\mathbb K}\|_\infty=p_n^{\mathbb K}(0)
		\lesssim_\beta n^{(\beta+2)/4},
		\qquad
		\sup_{z\in\mathbb K}
		\Pp(|W_n^{\mathbb K}-z|\le\varepsilon)
		\lesssim_\beta n^{(\beta+2)/4}\varepsilon^\beta.
		\]
		In particular, for $\mathbb K=\C$, this resolves the Permanent
		Anticoncentration Conjecture of Aaronson and Arkhipov.	
		The proof compares the squared Gaussian permanent with the squared (Study)
		determinant in Laplace-transform order.
	\end{abstract}
	
	\maketitle
	
	\section{Introduction}
	
	Let
	\[
	\mathbb K\in\{\R,\C,\Hh\},
	\qquad
	\beta=\dim_{\R}\mathbb K\in\{1,2,4\}.
	\]
	For $A\in\mathbb K^{n\times n}$, define the row-ordered permanent
	\[
	\per_{\mathbb K}A
	:=\sum_{\sigma\in S_n}
	a_{1,\sigma(1)}a_{2,\sigma(2)}\cdots a_{n,\sigma(n)}.
	\]
	Over $\R$ and $\C$ this is the ordinary permanent, and we usually omit the
	subscript.  Over $\Hh$ the order of multiplication is essential; this
	is also known as the Cayley permanent \cite{ArvindSrinivasan2010,EngelsRao2020}, and we write it as
	$\per_{\Hh}A$.
	
	Following Dyson's unitary, symplectic, and orthogonal classes of Hermitian random matrices, Ginibre's foundational paper \cite{Ginibre1965} introduced
	non-Hermitian Gaussian ensembles in the same three cases: complex,
	quaternion, and real.
	A standard $\mathbb K$-Gaussian is a scalar whose real coordinates are
	independent $N(0,1/\beta)$ variables; in particular, $\E|Z|^2=1$.
	Let $G_n^{\mathbb K}$ be a matrix with i.i.d. standard
	$\mathbb K$-Gaussian entries, yielding the standard
	$\mathbb K$-Ginibre matrix.

	Set
	\[
	W_n^{\mathbb K}
	:=\frac{\per_{\mathbb K}G_n^{\mathbb K}}{\sqrt{n!}}.
	\]
	The Permanent Anticoncentration Conjecture (PACC), introduced by Aaronson
	and Arkhipov in connection with BosonSampling \cite{AaronsonArkhipov2013},
	concerns $\mathbb K=\C$ and asserts that there is a polynomial $p$ such that,
	for all $n\ge1$ and $\delta>0$,
	\[
	\Pp\left(|\per G_n|<\frac{\sqrt{n!}}{p(n,1/\delta)}\right)<\delta.
	\]
	
	We briefly review some previous work on permanent anticoncentration.
	Tao and Vu proved the typical-scale estimate
	$|\per A_n|=n^{n/2+o(n)}$ with probability $1-o(1)$ for i.i.d.\ Bernoulli
	matrices \cite{TaoVu2009}; their
	argument also applies to Gaussian entries \cite{TaoMathOverflow2010}.  Kwan and Sauermann proved the analogous typical-scale
	estimate for random symmetric Bernoulli matrices \cite{KwanSauermann2022}.
	Ingram and Razborov proved a superpolynomial lower bound for
	the number of values attained by the permanent on sign matrices
	\cite{IngramRazborov2026}, and the exponential point-mass bound of Hunter, Kwan,
	and Sauermann under atom-size assumptions \cite{HunterKwanSauermann2025} gives
	exponential range and resolves the question raised by Ingram and Razborov.
	Subsequent work on Gaussian permanents has studied moments, numerical behavior,
	representation-theoretic formulas, and zeros
	\cite{KoehlerLeung2026,LundowMarkstrom2022,Nezami2021}.  
	
	Our work focuses on the Gaussian case. We prove the
	following.
	
	\begin{theorem}[Gaussian permanent anticoncentration]
		\label{thm:pacc-headline}
		For every $\mathbb K\in\{\R,\C,\Hh\}$ and $n\ge1$,
		$W_n^{\mathbb K}$ has a radial density $p_n^{\mathbb K}$ satisfying
		\begin{equation}\label{eq:gaussian-density-headline}
			\|p_n^{\mathbb K}\|_\infty
			=p_n^{\mathbb K}(0)
			\lesssim_\beta n^{(\beta+2)/4}.
		\end{equation}
		Consequently, for every $\varepsilon>0$,
		\begin{equation}\label{eq:gaussian-small-ball-headline}
			\sup_{z\in\mathbb K}
			\Pp(|W_n^{\mathbb K}-z|\le\varepsilon)
			\lesssim_\beta n^{(\beta+2)/4}\varepsilon^\beta.
		\end{equation}
	\end{theorem}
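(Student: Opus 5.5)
Write $W=W_n^{\mathbb K}$ and $Y:=|W|^2$. The plan is to show that $W$ is a radial Gaussian mixture, that its density is largest at the origin, and that $p_n^{\mathbb K}(0)$ is controlled by the negative moment $\E Y^{-\beta/2}$. This moment will be bounded by comparing $Y$ with the squared (Study) determinant in Laplace-transform order, as the abstract indicates.

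\textbf{Step 1: radial mixture structure.} Condition on rows $2,\dots,n$. The permanent is then $\per_{\mathbb K}G=\sum_j g_{1j}c_j$, where $c_j$ is the row-ordered permanent of the minor, and this multiplication is on the left in the quaternion case. Given the other rows, this is a centered $\mathbb K$-Gaussian with isotropic law, since left multiplication of a standard $\mathbb K$-Gaussian by $c_j$ is a rotation scaled by $|c_j|$. Its variance per real coordinate is $S/\beta$, where $S=\sum_j|c_j|^2/n!$.

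So $W$ is a scale mixture of centered isotropic Gaussians on $\R^\beta$. Its density is
\[
p(w)=\E\bigl[(2\pi S/\beta)^{-\beta/2}e^{-\beta|w|^2/(2S)}\bigr],
\]
which is radial and decreasing in $|w|$. This gives $\|p\|_\infty=p(0)\asymp_\beta \E S^{-\beta/2}$.

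The small-ball bound then follows immediately: $\Pp(|W-z|\le\varepsilon)\le \|p\|_\infty\cdot\mathrm{vol}(B_\varepsilon)\lesssim_\beta p(0)\varepsilon^\beta$. It therefore remains to prove $\E S^{-\beta/2}\lesssim_\beta n^{(\beta+2)/4}$.

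\textbf{Step 2: reduction to a Laplace transform.} Write $\E S^{-\beta/2}=\Gamma(\beta/2)^{-1}\int_0^\infty t^{\beta/2-1}\E e^{-tS}\,dt$. Here $S=\sum_j |c_j|^2/n!$ is a normalized sum of squared $(n-1)$-permanents. I would like to bound $\E e^{-tS}$ by the analogous quantity with permanents replaced by determinants, i.e.\ prove a Laplace-order inequality $\E e^{-t|\per G|^2/n!}\le \E e^{-t|\det G|^2/n!}$, or the analogue for $S$.

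For determinants, $|\det G|^2$ is a product of independent $\chi^2$-type variables (Bartlett decomposition), with degrees of freedom $\beta k$, $k=1,\dots,n$. Its normalized negative moments are explicit via Gamma-function ratios. A Stirling computation should give $\E(|\det G|^2/n!)^{-s}$ of polynomial size. The exponent $(\beta+2)/4$ arises as $\E S_{\det}^{-\beta/2}\asymp n^{(\beta+2)/4}$: the log of a product of Gamma ratios behaves like $\frac{s^2}{2}\sum_k \frac{2}{\beta k}\approx \frac{s^2}{\beta}\log n$, plus a linear correction from the fixed $\E=1$ normalization. I would do this computation explicitly, using the Study determinant for $\Hh$.

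\textbf{Step 3 (main obstacle): the Laplace-order comparison.} I would try induction on $n$ via row expansion. Both $\per$ and $\det$ are linear in row $1$, with coefficients given by minors, and the Gaussian row integrates out exactly. This gives $\E e^{-t|\per|^2/n!}=\E\,(1+2tS/\beta)^{-\beta/2}$ with $S$ built from the $(n-1)$-minors.

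The function $x\mapsto(1+2tx/\beta)^{-\beta/2}$ is completely monotone. Hence it suffices to propagate Laplace-order domination of $\sum_j|c_j|^2$, which is a sum of squared lower-order permanents, by the corresponding determinant sum. Expanding one further row turns this into a quadratic form in Gaussians, whose coefficients are a Gram matrix of minor vectors. For permanents, the columns of that Gram matrix behave "more spread out" (the covariance is more isotropic) than for determinants, whose minors satisfy linear relations. A majorization argument then gives Laplace domination, because $\E e^{-\langle g,Mg\rangle}=\det(I+2M/\beta)^{-\beta/2}$ is Schur-convex in the eigenvalues of $M$ for fixed trace.

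Making this induction close, and getting the quaternion row ordering right, is where I expect the real difficulty to lie. If it fails, the fallback is to compare moments $\E S^{-s}$ directly through a Gaussian integration-by-parts recursion.
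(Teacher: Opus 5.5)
Your Steps 1 and 2 are correct and match the paper's reduction. The conditional Gaussian structure gives a radial, nonincreasing density with $p_n^{\mathbb K}(0)=(\beta/2\pi)^{\beta/2}\E S^{-\beta/2}$. The theorem therefore follows from the Laplace-order comparison $\Pi_{n,\beta}\le_{\mathrm{Lt}}\|C_n^{\mathbb K}\|_2^2$, where $\Pi_{n,\beta}$ is the gamma product with the law of $\|C_n^{\det}\|_2^2$, together with the explicit computation $\kappa_{n,\beta}\asymp_\beta n^{(\beta+2)/4}$.

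That comparison is the whole content of the result, and Step~3 leaves it unproved. Moreover, the mechanism you sketch cannot work as stated. Conditionally on the remaining $(n-2)\times n$ block $B'$, you have $\|C_n\|_2^2=\|Mg\|_2^2$ with $M_{jk}=\per B'_{-\{j,k\}}$. Schur-convexity of $\det(I+2tM^*M/\beta)^{-\beta/2}$ helps only if the permanental spectrum of $M^*M$ is (weakly) majorized by the determinantal one, and this fails:
\begin{itemize}
\item For $n=3$ and $B'=(1,1,1)$, the two spectra are $(4,1,1)$ and $(3,3,0)$. They have the same trace, but neither majorizes the other.
\item For $n=4$ over $\R$, put $\bigl(\begin{smallmatrix}1&1\\-1&1\end{smallmatrix}\bigr)$ in the first two columns of $B'$ and zeros elsewhere. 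Every $2\times2$ permanental minor vanishes, while one determinantal minor equals $2$. So conditionally $\|C_4^{\mathrm{per}}\|_2^2=0<\|C_4^{\det}\|_2^2$, and by continuity the conditional Laplace inequality is reversed on an open set of $B'$.
\end{itemize}
Hence no argument pointwise in $B'$ can succeed; the inequality appears only after averaging. Averaging, however, requires the joint law of the overlapping minors $\per B'_{-\{j,k\}}$, not just a single cofactor norm. Your scalar induction hypothesis on $\|C_{n-1}\|_2^2$ does not control that joint law. Expanding further rows only produces larger arrays of dependent minors, so your induction does not close.

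The missing idea is an induction hypothesis that controls every linear functional of the cofactor vector, not only its norm. The paper's Fourier coordinate compression shows $0\le\varphi_n(z)\le\varphi_n(\|z\|_2e_1)$ for $\varphi_n(z)=\E e^{\ii(z,C_n)_{\R}}$. The argument runs as follows.
\begin{itemize}
\item Condition on all but two \emph{columns}. The characteristic function then takes the bilinear-plus-linear Gaussian form $F_{T,L}(a,b)$.
\item The rotation $(X\pm X')/\sqrt2$, $(Y\pm Y')/\sqrt2$ gives the exact identity $|F_{T,L}(\sqrt\theta a,\sqrt{1-\theta}\,b)|=F_{T,L}(a,0)^\theta F_{T,L}(0,b)^{1-\theta}$. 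H\"older then merges two nonzero coordinates into one without decreasing $\varphi_n$.
\item The extremal direction $e_1$ is a single $n\times n$ permanent, with characteristic function $\E e^{-r^2\|C_n\|_2^2/(2\beta)}$. So the scalar hypothesis at dimension $n$ bounds $\varphi_{n+1}$ by the characteristic function of the isotropic benchmark $\sqrt{\Pi_{n,\beta}}\,g_{n+1}$.
\item Fourier inversion against Gaussian kernels turns this into $\Pi_{n,\beta}g_{n+1}g_{n+1}^*\preceq_{\mathrm{Lt}}C_{n+1}C_{n+1}^*$.
\item The trace test, together with $\Pi_{n,\beta}\|g_{n+1}\|_2^2\stackrel{\mathrm d}=\Pi_{n+1,\beta}$, closes the induction.
\end{itemize}
This works uniformly over $\R,\C,\Hh$; the quaternionic case needs only the gamma product, not a cofactor expansion of the Study determinant. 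Your integration-by-parts fallback is too unspecified to evaluate.
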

	
	For $\mathbb K=\C$, the small-ball estimate reads
	\[
	\sup_{z\in\C}
	\Pp\bigl(|\per G_n-z|\le\varepsilon\sqrt{n!}\bigr)
	\lesssim n\varepsilon^2.
	\]
	In particular, PACC holds; see \cite{AaronsonArkhipov2013} for its role in
	hardness for classical simulation of BosonSampling and
	\cite[Section~9.2]{KoehlerLeung2026} for an application to zeros of biased
	Gaussian permanents.
	Our result also proves the real-Gaussian version of PACC, which has appeared in several places; see, for example,
	\cite[Conjecture~2]{ChabaudEtAl2017} and
	\cite[Conjecture~9]{BoulandDattaFeffermanHernandez2025}.  More generally,
	the Gaussian perturbation result in
	Section~\ref{sec:gaussian-perturbations} shows that, for every
	deterministic $A\in\R^{n\times n}$,
	\[
	\sup_{w\in\R}
	\Pp\bigl(
	|\per(G_n^{\R}+A)-w|\le\varepsilon\sqrt{n!}
	\bigr)
	\lesssim n^{3/4}\varepsilon.
	\]
	This proves Bouland et al.'s conjecture on gently perturbed Gaussian permanents,
	Conjecture~6 in \cite{BoulandDattaFeffermanHernandez2025}, in the stronger
	form allowing arbitrary deterministic shifts; see their discussion for applications.  
	The anticoncentration estimate also refines the known asymptotic $\log|\per G_n|=(\tfrac12+o(1))n\log n$ \cite{TaoVu2009} to $O_{\Pp}(\log n)$ error:
	\begin{corollary}[Logarithmic asymptotics]
		\label{cor:logarithmic-size}
		For every $\mathbb K\in\{\R,\C,\Hh\}$, as $n\to\infty$,
		\[
		\log|\per_{\mathbb K}G_n^{\mathbb K}|
		=\frac12\log(n!)+O_{\Pp}(\log n)
		=\frac n2\log n-\frac n2+O_{\Pp}(\log n).
		\]
	\end{corollary}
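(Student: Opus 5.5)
We prove the corollary by combining a lower-tail bound from Theorem~\ref{thm:pacc-headline} with an upper-tail bound from Markov's inequality on the second moment. Throughout we use $\E|W_n^{\mathbb K}|^2=1$. The upper tail is routine. Expanding $\per_{\mathbb K}G_n^{\mathbb K}$ over $\sigma\in S_n$, the terms for distinct permutations are orthogonal in $L^2$, since the entries are independent, centered, and satisfy $\E|Z|^2=1$. Each term is a product of $n$ independent unit-variance entries, so it has second moment $1$. This orthogonality also holds over $\Hh$: the real inner product $\RePart\E[x\bar y]$ of two row-ordered products vanishes as soon as some factor appears in one product but not the other, because that factor is centered and independent of everything else. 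Hence $\E|\per_{\mathbb K}G_n^{\mathbb K}|^2=n!$. Markov's inequality then gives $\Pp(|W_n^{\mathbb K}|\ge M)\le M^{-2}$ for every $M>0$.

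For the lower tail we apply \eqref{eq:gaussian-small-ball-headline} with $z=0$:
\[
\Pp(|W_n^{\mathbb K}|\le\varepsilon)\lesssim_\beta n^{(\beta+2)/4}\varepsilon^\beta .
\]
Fix a target probability $\delta>0$ and choose $\varepsilon=c_\beta\,\delta^{1/\beta}n^{-(\beta+2)/(4\beta)}$ with $c_\beta$ small enough that the right-hand side is at most $\delta/2$. Likewise choose $M=(2/\delta)^{1/2}$, so that the upper-tail probability is at most $\delta/2$. Then, with probability at least $1-\delta$,
\[
\log\varepsilon\le\log|W_n^{\mathbb K}|\le\log M .
\]
Here $\log\varepsilon=-\tfrac{\beta+2}{4\beta}\log n+O_{\delta,\beta}(1)$ and $\log M=O_\delta(1)$. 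Therefore $\bigl|\log|\per_{\mathbb K}G_n^{\mathbb K}|-\tfrac12\log n!\bigr|\le C_{\delta,\beta}\log n$ for $n\ge2$, which is the definition of $O_{\Pp}(\log n)$.

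The second equality in the corollary follows from Stirling's formula, $\log n!=n\log n-n+\tfrac12\log(2\pi n)+o(1)$. The $\tfrac12\log(2\pi n)$ term, after halving, is absorbed into the $O(\log n)$ error. There is no real obstacle beyond Theorem~\ref{thm:pacc-headline} itself. The only point needing care is the orthogonality of the permutation terms in the quaternionic case, which holds by the independence argument given above.
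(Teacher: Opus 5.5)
Your proof is correct and follows essentially the same route as the paper: the $z=0$ case of Theorem~\ref{thm:pacc-headline} controls the lower tail, Chebyshev/Markov with $\E|W_n^{\mathbb K}|^2=1$ controls the upper tail, and Stirling gives the second form. The only difference is bookkeeping. You fix $\delta$ and tune $\varepsilon$ and $M$, and you also verify the second moment, including the quaternionic orthogonality. The paper instead writes the single bound $\Pp(|\log|W_n^{\mathbb K}||>t)\lesssim_\beta n^{(\beta+2)/4}e^{-\beta t}+e^{-2t}$ and takes $t=M\log n$ with any fixed $M>(\beta+2)/(4\beta)$.
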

	
	\begin{proof}
		Theorem~\ref{thm:pacc-headline} at $z=0$ and Chebyshev's inequality,
		using $\E|W_n^{\mathbb K}|^2=1$, give, for every $t\ge0$,
		\[
		\Pp\left(\left|\log|W_n^{\mathbb K}|\right|>t\right)
		\lesssim_\beta
		n^{(\beta+2)/4}e^{-\beta t}+e^{-2t}.
		\]
		Taking $t=M\log n$ for any $M>(\beta+2)/(4\beta)$ and applying
		Stirling's formula proves the claim.
	\end{proof}

	\subsection{Proof motivation via cofactor expansion.} To minimize notation, we explain the high-level structure of the proof in the commutative case $\mathbb K = \mathbb F \in \{\mathbb R, \mathbb C\}$ with center $z = 0$; the later sections give a rigorous proof of the full result.
	Anticoncentration of the permanent, in the sense of the PACC conjecture, is equivalent to a lower-tail bound on the magnitude of the permanent. In turn, our basic proof architecture can be motivated as a way to inductively prove a lower-tail estimate on $|\per G_n|^2$ via Chernoff bounds.
	
	Recall that the standard Chernoff bound shows that 
	\[ \Pp(|\per G_n|^2\le t) \le \inf_{s > 0} e^{st}\E e^{-s|\per G_n|^2}, \]
	so it suffices to prove a sufficiently strong upper bound on the Laplace transform $\E e^{-s|\per G_n|^2}$. To try to do this by induction, we can use the cofactor expansion of $\per G_n$ to rewrite the permanent of an $n \times n$ matrix as a random linear combination of permanents of $(n - 1) \times (n - 1)$ minors.
	The dependence among $(n - 1) \times (n - 1)$ permanents of
	overlapping minors is the main obstacle to a direct anticoncentration proof \cite{TaoMathOverflow2010}. 
	
	For the Gaussian case, the proof rests on comparing permanental and determinantal cofactors in Laplace-transform order.  The determinantal cofactors are tractable because the signed cofactor vector is orthogonal to each row of the matrix: its inner product with any vector $u$ equals $\det(r_1;\ldots;r_{n-1};u)$ where $r_1,\ldots,r_{n-1}$ are the rows, so taking $u$ equal to any existing row gives a matrix with a repeated row, whose determinant vanishes.  Orthogonality to all $n-1$ rows confines the cofactor vector to the 1-dimensional complement of the row span, which is uniformly oriented by Gaussian rotational invariance; its law therefore satisfies an exact closed-form recursion.  The comparison, Theorem~\ref{thm:determinantal-cofactor-comparison} below, transfers this tractability to the permanent.  To state it precisely we first recall two definitions:

	\begin{definition}[Scalar Laplace-transform order]
		Following the standard convention
		\cite{AlzaidKimProschan1991,ShakedShanthikumar2007}, for nonnegative random
		variables $U,V$ write $U\le_{\mathrm{Lt}}V$ if
		\[
		\E e^{-sU}\ge\E e^{-sV}\qquad(s\ge0).
		\]
	\end{definition}
	
	\begin{definition}[Matrix-variate Laplace-transform order]
		This order was introduced for real symmetric matrices in
		\cite[Definition~2.13]{GenestOuimetRichards2024}.  For random positive
		semidefinite Hermitian $\mathbb K$-valued matrices $M,N$ of the same size,
		write
		$M\preceq_{\mathrm{Lt}}N$ if
		\begin{equation}\label{eq:matrix-laplace-order}
			\E e^{-\RePart\operatorname{tr}(TM)}
			\ge \E e^{-\RePart\operatorname{tr}(TN)}
			\qquad(T\succeq0).
		\end{equation}
	\end{definition}
	
	Let $B_n$ be an $(n-1)\times n$
	standard $\F$-Gaussian matrix and
	define its permanental and signed determinantal cofactor\footnote{The Study determinant is not defined via cofactor expansion, which is why we have restricted to $\F \in \{\R,\C\}$ for this explanation. See the proof for the details of the quaternion case.}  column vectors by
	\[
	(C_n^{\mathrm{per}})_j=\per((B_n)_{-j}),
	\qquad
	(C_n^{\det})_j=(-1)^{j-1}\det((B_n)_{-j}),
	\qquad 1\le j\le n,
	\]
	where $(B_n)_{-j}$ denotes deletion of column $j$, letting
	$C_1^{\mathrm{per}}=C_1^{\det}=(1)$. The key fact powering the Gaussian induction is the following comparison between the permanent and the determinant:
	\begin{theorem}[Determinantal cofactor comparison]
		\label{thm:determinantal-cofactor-comparison}
		For each $\F\in\{\R,\C\}$ and every $n\ge1$,
		\begin{equation}\label{eq:cofactor-matrix-comparison}
			C_n^{\det}(C_n^{\det})^*
			\preceq_{\mathrm{Lt}}
			C_n^{\mathrm{per}}(C_n^{\mathrm{per}})^*.
		\end{equation}
		Moreover,
		\begin{equation}\label{eq:determinant-permanent-comparison}
			|\det G_n|^2\le_{\mathrm{Lt}}|\per G_n|^2.
		\end{equation}
		Finally, for every $z\in\F^{n+1}$,
		\begin{equation}\label{eq:cofactor-fourier-comparison}
			0\le
			\E e^{\ii\RePart\langle z,C_{n+1}^{\mathrm{per}}\rangle}
			\le
			\E e^{\ii\RePart\langle z,C_{n+1}^{\det}\rangle}.
		\end{equation}
	\end{theorem}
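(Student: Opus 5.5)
The plan is to prove the Fourier comparison \eqref{eq:cofactor-fourier-comparison} by induction on $n$, and then derive \eqref{eq:cofactor-matrix-comparison} and \eqref{eq:determinant-permanent-comparison} from it by Gaussian averaging.

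\emph{Reduction of \eqref{eq:cofactor-matrix-comparison} to \eqref{eq:cofactor-fourier-comparison}.} For $T\succeq0$ and a fixed vector $c$, take an $\F$-Gaussian vector $g$ with suitably scaled identity covariance. Then
\[
e^{-c^*Tc}=\E_g e^{\ii\RePart\langle T^{1/2}g,c\rangle},
\]
up to a harmless constant depending only on $\beta$, which can be absorbed into $T$. Apply this with $c=C_n^{\mathrm{per}}$ and with $c=C_n^{\det}$, and use Fubini. The pointwise inequality \eqref{eq:cofactor-fourier-comparison}, at level $n$ and with $z=T^{1/2}g$, then gives $\E e^{-\operatorname{tr}(TCC^*)}$ larger for $C^{\det}$ than for $C^{\mathrm{per}}$. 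This is exactly \eqref{eq:cofactor-matrix-comparison}.

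\emph{Deriving \eqref{eq:determinant-permanent-comparison}.} Split $G_n$ as the independent pair $(B_n,r)$, where $r$ is the last row. Expanding along that row gives $\per G_n=\langle \bar r,C_n^{\mathrm{per}}\rangle$ and $\det G_n=\pm\langle\bar r,C_n^{\det}\rangle$. Hence
\[
e^{-s|\per G_n|^2}=e^{-\operatorname{tr}(T\,C^{\mathrm{per}}(C^{\mathrm{per}})^*)},\qquad T=s\,\bar r\bar r^{*}\succeq0,
\]
and similarly for the determinant. Conditioning on $r$ and applying \eqref{eq:cofactor-matrix-comparison} proves \eqref{eq:determinant-permanent-comparison}.

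\emph{The induction for \eqref{eq:cofactor-fourier-comparison}.} Let $B$ be the $n\times(n+1)$ matrix defining $C_{n+1}$. Write its rows as $B'$ (the first $n-1$ rows) and $b$ (the last row), with $b$ independent of $B'$. Expanding each cofactor along $b$ gives
\[
\langle z,C^{\mathrm{per}}_{n+1}\rangle=\langle b,\,D^{\mathrm{per}}(B')z\rangle,\qquad (D^{\mathrm{per}}z)_k=\per\bigl([B';z]_{-k}\bigr),
\]
so $D^{\mathrm{per}}z$ is itself a permanental cofactor vector, namely that of $B'$ with the deterministic row $z$ appended. Integrating out the Gaussian row $b$ yields
\[
\E e^{\ii\RePart\langle z,C^{\mathrm{per}}_{n+1}\rangle}=\E_{B'}\exp\bigl(-\kappa_\beta|D^{\mathrm{per}}z|^2\bigr)\ge0,
\]
which already gives the lower bound $0\le$ in \eqref{eq:cofactor-fourier-comparison}. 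The same computation holds with signed determinantal cofactors. By Gaussian rotational invariance, $z$ can be rotated to $|z|e_{n+1}$ in the determinantal case, and then $|D^{\det}z|^2$ has an explicit law. So it remains to show
\[
|D^{\det}z|^2\le_{\mathrm{Lt}}|D^{\mathrm{per}}z|^2 .
\]
My approach is to expand $|D^{\mathrm{per}}z|^2=\sum_k|\per([B';z]_{-k})|^2$ once more, this time along the appended row $z$. This writes it as a quadratic form $(C')^*Q_z C'$ in smaller cofactors of size $n$, with $Q_z\succeq0$ depending only on $z$ (possibly after conditioning on one further Gaussian row of $B'$ so that the cofactors are again of the type $C_n$). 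The induction hypothesis \eqref{eq:cofactor-matrix-comparison} at level $n$, with $T=\kappa_\beta Q_z$, then replaces $C_n^{\mathrm{per}}$ by $C_n^{\det}$ inside the Laplace transform.

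\emph{Main obstacle.} The substitution above yields the permanental form $Q_z^{\mathrm{per}}$ evaluated on determinantal cofactors, whereas we need the determinantal form $Q_z^{\det}$. So the key step is the deterministic inequality
\[
(c^{\det})^*Q_z^{\mathrm{per}}c^{\det}\ \ge\ (c^{\det})^*Q_z^{\det}c^{\det}.
\]
The intended mechanism is orthogonality. For determinants, the quadratic form reduces to $|z|^2|c|^2-|\langle z,c\rangle|^2$-type expressions, because the signed cofactor vector is orthogonal to the row span. The permanental form lacks the cancelling cross terms, and one hopes it dominates, for instance by being $|z|^2|c|^2+|\langle\bar z,c\rangle|^2$-like. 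I would first verify this at $n=1,2$ by direct computation. If a pointwise inequality fails, I would fall back on proving it only in $\preceq_{\mathrm{Lt}}$ after averaging over the sign and phase symmetries of the Gaussian rows.
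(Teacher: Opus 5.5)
Your reductions are sound and agree with the paper. Gaussian averaging of the Fourier inequality is Lemma~\ref{lem:fourier-outer-products}, applied with \eqref{eq:cofactor-fourier-comparison} at index $n-1$; the case $n=1$ is trivial. Conditioning on one row then gives \eqref{eq:determinant-permanent-comparison}, and integrating out a Gaussian row gives the lower bound $0\le$.

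\textbf{The gap.} The inductive upper bound in \eqref{eq:cofactor-fourier-comparison} carries the whole theorem, and your argument for it does not work.

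First, expanding along $z$ gives $|D^{\mathrm{per}}z|^2=\sum_k\bigl|\sum_{j\ne k}z_j\per(B'_{-\{j,k\}})\bigr|^2$. This is a quadratic form in all $\binom{n+1}{2}$ second-order minors of the $(n-1)\times(n+1)$ matrix $B'$, not in a single cofactor vector of an $(n-1)\times n$ Gaussian matrix. Each slice $(\per(B'_{-\{j,k\}}))_{j\ne k}$ is a copy of $C_n^{\mathrm{per}}$. But the form couples $n+1$ overlapping, dependent slices, and the hypothesis $C_n^{\det}(C_n^{\det})^*\preceq_{\mathrm{Lt}}C_n^{\mathrm{per}}(C_n^{\mathrm{per}})^*$ says nothing about their joint law. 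Conditioning on another row only produces smaller minors. This is exactly the overlapping-minor dependence that blocks naive cofactor induction.

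Second, the deterministic inequality you aim for is false already at $n=2$. There $B'$ is one row $u$, and over $\R$:
\begin{itemize}
\item $|D^{\mathrm{per}}z|^2=\|z\|_2^2\|u\|_2^2+(\sum_jz_ju_j)^2-2\sum_jz_j^2u_j^2$;
\item $|D^{\det}z|^2=\|z\|_2^2\|u\|_2^2-(\sum_jz_ju_j)^2$.
\end{itemize}
Taking $z=(1,1,0)$ and $u=(1,-1,0)$ gives $0<4$. So domination holds only after Gaussian averaging. Your fallback gives no mechanism for that averaging once the minors are no longer Gaussian.

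\textbf{The missing idea.} The paper uses Fourier coordinate compression (Proposition~\ref{prop:compression}): $\E e^{\ii(z,C_{n+1}^{\mathrm{per}})_{\R}}\le\E e^{\ii(\|z\|_2e_1,C_{n+1}^{\mathrm{per}})_{\R}}$. The steps are:
\begin{enumerate}
\item Condition on all columns except two, $X$ and $Y$. The conditional characteristic function then has the bilinear-plus-linear form $F_{T,L}(w_1,w_2)$.
\item The rotation $X_\pm=(X\pm X')/\sqrt2$, $Y_\pm=(Y\pm Y')/\sqrt2$, together with the quadratic scaling lemma, gives $|F_{T,L}(\sqrt\theta\,a,\sqrt{1-\theta}\,b)|=F_{T,L}(a,0)^\theta F_{T,L}(0,b)^{1-\theta}$.
\item H\"older then merges two nonzero coordinates into one without decreasing the characteristic function.
\end{enumerate}
At $\|z\|_2e_1$ the characteristic function equals $\E e^{-\|z\|_2^2\|C_n^{\mathrm{per}}\|_2^2/(2\beta)}$. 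So the induction only needs the scalar comparison $\Pi_{n,\beta}\le_{\mathrm{Lt}}\|C_n^{\mathrm{per}}\|_2^2$. The determinantal side is $\sqrt{\Pi_{n,\beta}}\,g_{n+1}$ by spherical invariance. No quadratic form in overlapping minors ever has to be controlled.
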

	The main technical step is a \emph{compression} argument---a one-sided replacement for the determinant's spherical invariance, showing that no test direction concentrates the cofactor vector more than a single coordinate---underlying the last Fourier inequality above; we leave the details to the proof.
	The permanent induction then runs the same cycle as the determinant's, with the exact rotational identities replaced by these one-sided comparisons.

	In the actual proof, the determinant enters implicitly: the inductive benchmark is a product of independent gamma variables whose law coincides with that of $|\det G_n|^2$.
	Section~\ref{sec:determinantal-benchmark} explains the connection between this gamma benchmark and
	determinant and Study determinant models.  Section~\ref{sec:gaussian-core}
	proves the common Gaussian cofactor induction, its comparison consequences,
	and Theorem~\ref{thm:pacc-headline}.
	Section~\ref{sec:gaussian-perturbations} proves stability under arbitrary
	independent additive perturbations, concluding the main body of the paper.
	
	Appendix~\ref{app:stronger-orders} shows
	that the Laplace-transform comparison generally cannot be strengthened to
	stochastic or convex order.  
	Appendix~\ref{app:cayley-determinant} studies the
	quaternionic Cayley (i.e., row-ordered) determinant obtained by inserting
	$\operatorname{sgn}(\sigma)$ into the permanent sum.  Although this Cayley
	determinant is different from the Study determinant, the same cofactor argument
	proves
	$\Sdet(G_n^{\Hh})^2\le_{\mathrm{Lt}}|\Cdet G_n^{\Hh}|^2$; low-dimensional
	moments distinguish the Study determinant, Cayley determinant, and permanent.
	Finally, Appendix~\ref{app:independent-rows} gives sufficient projection
	conditions under which independent, possibly non-Gaussian rows retain the
	Ginibre comparison at the exact second-moment scale.
	
	\subsection{Notation}
	When $\mathbb K=\mathbb F\in\{\R,\C\}$, we suppress field superscripts
	when no confusion can arise.
	Regard
	elements of $\mathbb K^m$ as column vectors and write
	\[
	\langle z,w\rangle=\sum_{j=1}^m\overline{z_j}w_j,
	\qquad
	(z,w)_{\R}=\RePart\langle z,w\rangle.
	\]
	Note that $(\cdot,\cdot)_{\R}$ is the ordinary Euclidean inner product on the
	underlying real space $\mathbb{R}^{\beta m}$.  The real Euclidean
	structure is what will be used for all Fourier transforms and Gaussian integrations below.
	
	Set
	\[
	X_n^{\mathbb K}
	:=\frac{|\per_{\mathbb K}G_n^{\mathbb K}|^2}{n!},
	\qquad
	\Delta_n^{\mathbb K}
	:=
	\begin{cases}
		|\det G_n^{\mathbb K}|^2/n!,&\mathbb K\in\{\R,\C\},\\
		\Sdet(G_n^{\Hh})^2/n!,&\mathbb K=\Hh.
	\end{cases}
	\]
	Here $\Sdet A=\prod_j\sigma_j(A)$ is the Study determinant, where the
	$\sigma_j(A)$ are the quaternionic singular values of $A$; see
	Definition~\ref{def:study-determinant}.  
	These normalizations are chosen so that
	\[
	\E X_n^{\mathbb K}=\E\Delta_n^{\mathbb K}=1.
	\]

	\section{Determinantal interpretation of the gamma benchmark}
	\label{sec:determinantal-benchmark}
	
	The gamma products in the Gaussian cofactor induction have concrete
	determinantal interpretations.  Over $\R$ and $\C$, they govern the exact laws
	of determinant cofactor vectors and their norms; over $\Hh$, the full product
	is the law of the squared Study determinant.  The real and complex determinant
	induction is the equality model for the permanent argument, although its
	identities are not formally used to prove the gamma comparisons.
	
	For each $\beta\in\{1,2,4\}$, let
	\[
	\gamma_{k,\beta}\sim
	\operatorname{Gamma}\left(\frac{\beta k}{2},\frac2\beta\right)
	\quad\text{independently over }k,
	\]
	where the parameters are shape and scale.  Thus $\gamma_{k,1}$ is
	chi-squared with $k$ degrees of freedom and
	$\gamma_{k,2}\sim\operatorname{Gamma}(k,1)$, while
	$\gamma_{k,4}\sim\operatorname{Gamma}(2k,1/2)$.
	For $m\ge0$, set
	\[
	\Pi_{m,\beta}=\prod_{k=2}^m\gamma_{k,\beta},
	\]
	with the empty product interpreted as $1$.  Thus
	$\Pi_{0,\beta}=\Pi_{1,\beta}=1$.
	
	In the real and complex cases and dimensions $n\ge2$, the determinant realizes
	these products geometrically: its cofactor vector has the law of a standard
	$\F$-Gaussian vector multiplied by the independent scale
	$\sqrt{\Pi_{n-1,\beta}}$, and its squared norm has law $\Pi_{n,\beta}$.
	
	\begin{proposition}[Gaussian determinant benchmark]
		\label{prop:determinant-benchmark}
		Let $\F\in\{\R,\C\}$ and $\beta=\dim_{\R}\F$.  For $n\ge2$, let $g_n$ be a
		standard $\F$-Gaussian column independent of $\Pi_{n-1,\beta}$.  Then
		\begin{equation}\label{eq:determinantal-gamma-vector}
			C_n^{\det}\ \stackrel{\mathrm d}=\
			\sqrt{\Pi_{n-1,\beta}}\,g_n.
		\end{equation}
		Consequently,
		\begin{align}
			C_n^{\det}(C_n^{\det})^*
			&\stackrel{\mathrm d}=
			\Pi_{n-1,\beta}g_ng_n^*,                              \label{eq:det-cofactor-matrix-law}\\
			\|C_n^{\det}\|_2^2
			&\stackrel{\mathrm d}=\Pi_{n,\beta}.
			\label{eq:det-cofactor-product}
		\end{align}
		Moreover, for every $n\ge1$,
		\begin{equation}\label{eq:det-gamma-intro}
			\Delta_n\ \stackrel{\mathrm d}=\
			\prod_{k=1}^n\frac{\gamma_{k,\beta}}{k}.
		\end{equation}
	\end{proposition}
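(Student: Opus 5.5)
The cofactor vector is determined by the row span of $B_n$, and its length by a Gram determinant; rotational invariance then supplies the direction. Write $r_1,\dots,r_{n-1}$ for the rows of $B_n$, so $C:=C_n^{\det}$ is the generalized cross product. As noted in the introduction, $\langle \bar u\text{-dependence}\rangle$ is linear: $\sum_j u_j C_j=\det(r_1;\dots;r_{n-1};u^{\top})$. Hence $C$ is orthogonal (in the appropriate bilinear/sesquilinear sense) to every row, and so lies on the line $L^\perp$, where $L$ is the span of the (conjugated) rows. Taking $u=\overline{C}$ gives
\[
\|C\|_2^2=\bigl|\det(B_n;C^*)\bigr|.
\]
Combining this with $|\det(B_n;C^*)|^2=\det\bigl(B_nB_n^*\bigr)\,\|C\|^2$ (block Gram determinant, since $C$ is orthogonal to the rows) yields
\[
\|C\|_2^2=\det(B_nB_n^*).
\]

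For the direction, take any $U\in O(n)$ or $U(n)$. Then $B_nU$ has the same law as $B_n$, and the cofactor vector transforms equivariantly: $C(B_nU)=\det(U)\,\overline{U}^{\top}C(B_n)$, or the analogous formula with a unimodular phase. So the law of $C$ is invariant under the full orthogonal or unitary group, after absorbing the phase $\det U$, which is itself a rotation of the line. Therefore $C=\|C\|\theta$, with $\theta$ uniform on the unit sphere of $\F^n$ and independent of $\|C\|$. The only subtlety here is checking that $\|C\|$ is a function of $B_nB_n^*$, which is invariant under $B_n\mapsto B_nU$. This gives a direction independent of the norm; strictly, invariance of the joint law plus the fact that the norm is a rotation invariant yields the independence. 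Since a standard Gaussian $g_n$ also factors as $\|g_n\|\theta$ with $\|g_n\|^2\sim\gamma_{n,\beta}$ independent of $\theta$, \eqref{eq:determinantal-gamma-vector} reduces to the scalar identity
\[
\det(B_nB_n^*)\stackrel{\mathrm d}=\Pi_{n-1,\beta}\,\gamma_{n,\beta}=\Pi_{n,\beta}.
\]
That scalar identity is exactly \eqref{eq:det-cofactor-product}, and \eqref{eq:det-cofactor-matrix-law} is immediate from \eqref{eq:determinantal-gamma-vector}.

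The main step is this Gram-determinant law, which I would prove by the Bartlett decomposition / Gram--Schmidt. Write
\[
\det(B_nB_n^*)=\prod_{i=1}^{n-1}\operatorname{dist}(r_i,\operatorname{span}(r_1,\dots,r_{i-1}))^2 .
\]
Conditionally on $r_1,\dots,r_{i-1}$, the Gaussian $r_i$ projected onto the orthogonal complement, of $\F$-dimension $n-i+1$, has squared norm $\gamma_{n-i+1,\beta}$. This law does not depend on the conditioning, so the factors are independent and
\[
\det(B_nB_n^*)\stackrel{\mathrm d}=\prod_{k=2}^{n}\gamma_{k,\beta}=\Pi_{n,\beta}.
\]
The same argument applied to the square matrix $G_n$ gives
\[
|\det G_n|^2\stackrel{\mathrm d}=\prod_{k=1}^n\gamma_{k,\beta}.
\]
Dividing by $n!$ gives \eqref{eq:det-gamma-intro}, and the case $n=1$ is trivial.

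The main obstacle is bookkeeping the normalization so that $\|\mathrm{proj}\|^2$ is exactly $\operatorname{Gamma}(\beta k/2,2/\beta)$. The $\beta k$ real coordinates are each $N(0,1/\beta)$, so the sum of their squares is $(1/\beta)\chi^2_{\beta k}$. I need to verify this against the stated scale, and compare with the normalization $\E\Delta_n=1$, which forces $\E\gamma_{k,\beta}=k$. A second point to handle carefully is the complex-conjugation convention in the orthogonality relation, which fixes whether $C$ or $\overline C$ lies on $L^\perp$. This does not affect the law, by conjugation invariance of the Gaussian.
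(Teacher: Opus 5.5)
Your proof is correct, but it takes a different route from the paper's.

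\emph{Your route.} You identify the norm explicitly as $\|C_n^{\det}\|_2^2=\det(B_nB_n^*)$. This is just Cauchy--Binet. Your block-Gram derivation divides by $\|C\|_2^2$, which is harmless because $C\neq0$ exactly when $B_n$ has full rank, which holds almost surely. You then get the law $\Pi_{n,\beta}$ from the Bartlett/Gram--Schmidt decomposition. Finally, you assemble the vector law from a polar decomposition with a uniform direction independent of the radius. This is the same Gram--Schmidt computation the paper uses only for the Study determinant in Proposition~\ref{prop:study-determinant-benchmark}.

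\emph{The paper's route.} The paper never computes $\|C_n^{\det}\|_2$ directly. It first proves spherical invariance via $C(BQ)=Q^{-1}C(B)$ for $Q\in\operatorname{SO}(n)$ or $\operatorname{SU}(n)$. Restricting to $\det Q=1$ removes the phase $\det U$ you had to absorb, and these groups are already transitive on the unit sphere for $n\ge2$. It then inducts on $n$. The first coordinate of $C_{n+1}^{\det}$ is distributed as $\det G_n$. Expansion along an independent Gaussian row gives $\|C_n^{\det}\|_2\,g_1$. Cram\'er--Wold then upgrades agreement of first-coordinate laws of two spherically invariant vectors to equality of the vector laws.

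\emph{What each buys.} Your argument is direct and non-recursive, and the same computation yields $|\det G_n|^2$. The paper's argument is chosen because it is the template for the permanent. The permanental cofactor vector has neither a Gram-determinant formula for its norm nor spherical invariance, but it keeps the first-coordinate recursion. Fourier compression (Proposition~\ref{prop:compression}) then replaces the invariance step.

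The normalization you flagged checks out: $\beta^{-1}\chi^2_{\beta k}\sim\operatorname{Gamma}(\beta k/2,2/\beta)$, which is exactly $\gamma_{k,\beta}$.
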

	
	\begin{proof}
		For this proof, write $C(B)$ for the signed cofactor column of an
		$(n-1)\times n$ matrix $B$.  Expansion along the first row gives
		\[
		\det\begin{pmatrix}x\\ B\end{pmatrix}=xC(B)
		\qquad(x\in\F^{1\times n}).
		\]
		Let $Q\in\operatorname{SO}(n)$ when $\F=\R$, and
		$Q\in\operatorname{SU}(n)$ when $\F=\C$.  Since $\det Q=1$, the preceding
		identity yields
		\[
		xC(BQ)
		=\det\begin{pmatrix}x\\ BQ\end{pmatrix}
		=\det\left(\begin{pmatrix}xQ^{-1}\\ B\end{pmatrix}Q\right)
		=xQ^{-1}C(B).
		\]
		Hence $C(BQ)=Q^{-1}C(B)$.  Since $B_nQ\stackrel{\mathrm d}=B_n$, the law of
		$C_n^{\det}$ is invariant under $\operatorname{SO}(n)$ or
		$\operatorname{SU}(n)$.  Both groups act transitively on the unit sphere, so
		$C_n^{\det}$ is spherically invariant.
		
		We now prove inductively that
		\[
		\|C_n^{\det}\|_2^2\stackrel{\mathrm d}=\Pi_{n,\beta}
		\quad(n\ge1),
		\qquad
		C_{n+1}^{\det}\stackrel{\mathrm d}=
		\sqrt{\Pi_{n,\beta}}\,g_{n+1}
		\quad(n\ge1).
		\]
		The norm identity at $n=1$ is immediate from
		$C_1^{\det}=(1)$ and $\Pi_{1,\beta}=1$.  Suppose the norm identity holds at
		dimension $n$.  The first coordinate of $C_{n+1}^{\det}$ has the law of
		$\det G_n$.  Expanding this determinant along a standard Gaussian row $r$,
		independent of $B_n$, gives
		\[
		(C_{n+1}^{\det})_1
		\stackrel{\mathrm d}=rC_n^{\det}
		\stackrel{\mathrm d}=\|C_n^{\det}\|_2 g_1
		\stackrel{\mathrm d}=\sqrt{\Pi_{n,\beta}}\,g_1,
		\]
		where $g_1$ is a standard scalar $\F$-Gaussian independent of
		$\Pi_{n,\beta}$.
		
		The vector $\sqrt{\Pi_{n,\beta}}\,g_{n+1}$ is spherically invariant, as is
		$C_{n+1}^{\det}$ by the covariance proved above.  Spherically invariant
		vectors with the same first-coordinate law have the same distribution:
		rotational invariance identifies every real one-dimensional projection with
		a scaled first-coordinate projection, so this follows from the
		Cram\'er--Wold theorem.  Hence
		\[
		C_{n+1}^{\det}\stackrel{\mathrm d}=
		\sqrt{\Pi_{n,\beta}}\,g_{n+1}.
		\]
		Taking squared norms gives
		\[
		\|C_{n+1}^{\det}\|_2^2
		\stackrel{\mathrm d}=
		\Pi_{n,\beta}\|g_{n+1}\|_2^2
		\stackrel{\mathrm d}=
		\Pi_{n,\beta}\gamma_{n+1,\beta}
		=\Pi_{n+1,\beta},
		\]
		closing the induction.
		
		The vector identity in the induction, with the index shifted by one, proves
		\eqref{eq:determinantal-gamma-vector}.  Taking the outer product and squared
		norm gives
		\eqref{eq:det-cofactor-matrix-law} and
		\eqref{eq:det-cofactor-product}.  Finally,
		$(C_{n+1}^{\det})_1\stackrel{\mathrm d}=\det G_n$, so
		\[
		|\det G_n|^2
		\stackrel{\mathrm d}=
		\Pi_{n,\beta}|g_1|^2
		\stackrel{\mathrm d}=
		\prod_{k=1}^n\gamma_{k,\beta}.
		\]
		Dividing by $n!=\prod_{k=1}^n k$ proves
		\eqref{eq:det-gamma-intro}.
	\end{proof}
	
	The non-Hermitian Gaussian ensemble over $\Hh$ is usually called the
	\emph{symplectic Ginibre ensemble}, or GinSE; it is also called the quaternionic
	or quaternion-real Ginibre ensemble \cite{Ginibre1965,Forrester2010}.  Here
	\emph{symplectic} refers to its Dyson symmetry class, not to a symplectic
	constraint on the matrices.
	
	\begin{definition}[Study determinant]
		\label{def:study-determinant}
		Every $A\in\Hh^{n\times n}$ has a quaternionic singular-value decomposition
		\[
		A=U\operatorname{diag}(\sigma_1(A),\ldots,\sigma_n(A))V^*,
		\]
		where $U,V\in\Hh^{n\times n}$ are unitary and the singular values
		$\sigma_j(A)$ are nonnegative real numbers.  Equivalently,
		$\sigma_1(A)^2,\ldots,\sigma_n(A)^2$ are the eigenvalues of $A^*A$.  Define
		\[
		\Sdet A=\prod_{j=1}^n\sigma_j(A).
		\]
	\end{definition}
	
	This agrees with the Study determinant of Cohen and De Leo
	\cite[Corollary~6.5]{CohenDeLeo2000}.  It is multiplicative and agrees with
	$|\det A|$ when $A$ is complex
	\cite[Theorem~5.1 and Corollary~6.3]{CohenDeLeo2000}.  When $A$ is regarded as
	a real-linear operator,
	\[
	\det\nolimits_{\R}(A)=\Sdet(A)^4.
	\]
	For positive quaternionic Hermitian $H$, $\Sdet H$ is the product of the real
	eigenvalues and hence agrees with the Moore determinant
	\cite{Aslaksen1996}.  Consequently, $\Sdet(A)^2$ is the Moore determinant of
	$A^*A$.
	
	\begin{proposition}[Study determinant benchmark]
		\label{prop:study-determinant-benchmark}
		For every $n\ge1$,
		\begin{align}
			\Sdet(G_n^{\Hh})^2
			&\stackrel{\mathrm d}=\prod_{k=1}^n\gamma_{k,4},
			\label{eq:study-gamma-product}\\
			\Delta_n^{\Hh}
			=\frac{\Sdet(G_n^{\Hh})^2}{n!}
			&\stackrel{\mathrm d}=
			\prod_{k=1}^n\frac{\gamma_{k,4}}k.
			\label{eq:quaternion-determinant-product}
		\end{align}
	\end{proposition}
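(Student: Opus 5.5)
We prove \eqref{eq:study-gamma-product} by a Gram--Schmidt (QR) argument; the real and complex analogue is the classical Bartlett decomposition. The quaternionic Cayley determinant has no usable cofactor identity, so we do not try to imitate the proof of Proposition~\ref{prop:determinant-benchmark}. Instead we work with $A^*A$, whose Moore determinant is $\Sdet(A)^2$.

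Let $c_1,\ldots,c_n\in\Hh^n$ be the columns of $G=G_n^{\Hh}$. These are i.i.d.\ standard $\Hh$-Gaussian vectors, and each has a density on $\R^{4n}$, so almost surely they are right-$\Hh$-linearly independent. Run Gram--Schmidt in the right $\Hh$-module $\Hh^n$ with the inner product $\langle z,w\rangle$. Each column is projected off the right span of the previous ones and then normalized. This gives $G=QR$, where $Q$ is quaternionic unitary and $R$ is upper triangular with real positive diagonal $r_{kk}=\|c_k-P_{k-1}c_k\|_2$. Here $P_{k-1}$ is the orthogonal projection onto the right span $S_{k-1}$ of $c_1,\ldots,c_{k-1}$.

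By multiplicativity of the Study determinant (\cite{CohenDeLeo2000}), $\Sdet G=\Sdet Q\cdot\Sdet R$. We have $\Sdet Q=1$, because all singular values of a unitary matrix equal $1$. It remains to compute $\Sdet R$ for triangular $R$. Since the singular values are defined via $A^*A$, use the identity $\det_\R(R)=\Sdet(R)^4$ stated above. Viewed as a real-linear operator on $\R^{4n}$, $R$ is block upper triangular with $4\times4$ diagonal blocks given by multiplication by the real scalars $r_{kk}$. Hence $\det_\R R=\prod_k r_{kk}^4$, and so $\Sdet(G)^2=\prod_k r_{kk}^2$.

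Next we identify the laws. Condition on $c_1,\ldots,c_{k-1}$. The subspace $S_{k-1}^\perp$ is a right $\Hh$-subspace of quaternionic dimension $n-k+1$, so it has real dimension $4(n-k+1)$. The vector $c_k$ is a standard real Gaussian in $\R^{4n}$ with covariance $\tfrac14 I$, and it is independent of $S_{k-1}$. Its orthogonal projection onto $S_{k-1}^\perp$ is therefore a centered Gaussian with covariance $\tfrac14$ times the identity on a $4(n-k+1)$-dimensional real space. Here one checks that the real Euclidean structure $(\cdot,\cdot)_\R$ makes the quaternionic orthogonal complement coincide with the real orthogonal complement. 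Consequently $r_{kk}^2$ is distributed as $\tfrac14\chi^2_{4(n-k+1)}$, which is $\operatorname{Gamma}(2(n-k+1),1/2)$, i.e.\ the law of $\gamma_{n-k+1,4}$. This conditional law does not depend on $c_1,\ldots,c_{k-1}$, so the $r_{kk}^2$ are independent. Multiplying over $k$ gives \eqref{eq:study-gamma-product}. Dividing by $n!$ gives \eqref{eq:quaternion-determinant-product}.

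The main obstacle is justifying the quaternionic linear algebra. Three points need care: that Gram--Schmidt works in the right module (with scalars multiplied on the right), that the right-$\Hh$ orthogonal complement equals the real orthogonal complement, and that $\Sdet$ is multiplicative with $\Sdet$ of a triangular matrix equal to the product of the moduli of its diagonal entries. For the first two, note that $(z,w)_\R=0$ for all $w$ in a right-$\Hh$-subspace forces $\langle z,w\rangle=0$, because we may replace $w$ by $wq$ for unit quaternions $q$. The triangular determinant follows from the $\det_\R$ identity. As a fallback, one can avoid QR entirely: write $\Sdet(G)^2$ as the product of the successive distances $\operatorname{dist}(c_k,S_{k-1})^2$, which is the quaternionic Gram determinant via the Moore determinant of $G^*G$, and then apply the same Gaussian projection computation.
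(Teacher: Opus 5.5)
Your proposal is correct and follows the paper's proof: Gram--Schmidt gives $G=QR$ with $\Sdet(G)^2=\prod_k r_{kk}^2$. Conditionally on the earlier columns, each $r_{kk}^2$ is the squared norm of a Gaussian projection onto a complement of quaternionic dimension $n-k+1$, hence has law $\gamma_{n-k+1,4}$ independently of the past. The only differences are that you derive the triangular formula from $\det_{\R}=\Sdet^4$ instead of citing Cohen--De Leo, and that you carry out the projection computation explicitly in real coordinates instead of invoking unitary invariance.
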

	
	\begin{proof}
		Apply Gram--Schmidt to the columns $v_1,\ldots,v_n$ of $G_n^{\Hh}$.
		This writes $G_n^{\Hh}=QR$, where $Q$ is unitary, $R=(r_{ij})$ is upper
		triangular, and
		\[
		r_{jj}=\operatorname{dist}\left(
		v_j,\operatorname{span}_{\Hh}(v_1,\ldots,v_{j-1})
		\right)\ge0.
		\]
		Multiplicativity and the triangular formula for the Study determinant
		\cite[Corollary~6.1]{CohenDeLeo2000} give
		\[
		\Sdet(G_n^{\Hh})^2=\prod_{j=1}^n r_{jj}^2.
		\]
		Conditionally on $v_1,\ldots,v_{j-1}$, the orthogonal complement of their
		span has quaternionic dimension $n-j+1$.  Unitary invariance therefore gives
		\[
		r_{jj}^2\stackrel{\mathrm d}=\gamma_{n-j+1,4},
		\qquad 1\le j\le n.
		\]
		This conditional law does not depend on the preceding columns, so the
		diagonal squares are independent.  Consequently,
		\[
		\Sdet(G_n^{\Hh})^2
		\stackrel{\mathrm d}=\prod_{k=1}^n\gamma_{k,4}.
		\]
		This proves
		\eqref{eq:study-gamma-product}, and division by $n!$ proves
		\eqref{eq:quaternion-determinant-product}.
	\end{proof}
	
	Thus the determinant and Study determinant realize the same scalar gamma
	benchmark in the three fields.
	
	\section{Gaussian cofactor induction}\label{sec:gaussian-core}
	
	We now prove the corresponding one-sided gamma comparisons directly from the
	products $\Pi_{n,\beta}$ and independent Gaussian vectors.
	For determinant cofactors, spherical invariance promotes the one-coordinate
	Gaussian recursion to an exact vector identity.  For permanental cofactors,
	Fourier coordinate compression supplies the corresponding one-sided step.
	
	\subsection{Real comparison lemmas}
	The following real comparison lemmas form the core of the proof.  In
	applications over $\mathbb K$, we identify $\mathbb K^m$ with $\R^{\beta m}$;
	the matrices below may therefore represent maps that are only real-linear.
	
	After conditioning on all but two Gaussian columns, multilinearity leaves a
	bilinear interaction between those columns together with one linear term in
	each.  We begin by analyzing characteristic functions of precisely this form.
	For $d,k\ge1$, $T\in\R^{d\times d}$, $L\in\R^{d\times k}$, and
	$a,b\in\R^k$, set
	\[
	F_{T,L}(a,b)=\E_{X,Y}\exp\left(\ii\bigl(
	X^*TY+X^*Lb+Y^*La\bigr)\right),
	\]
	where $X,Y$ are independent standard Gaussian column vectors in $\R^d$.
	
	\begin{lemma}[One-sided quadratic scaling]
		\label{lem:one-sided-quadratic-scaling}
		The quantities $F_{T,L}(a,0)$ and $F_{T,L}(0,b)$ are strictly positive.
		Moreover, for every $t\in\R$,
		\begin{align*}
			\frac{F_{T,L}(ta,0)}{F_{T,L}(0,0)}
			&=\left(\frac{F_{T,L}(a,0)}{F_{T,L}(0,0)}\right)^{t^2},\\
			\frac{F_{T,L}(0,tb)}{F_{T,L}(0,0)}
			&=\left(\frac{F_{T,L}(0,b)}{F_{T,L}(0,0)}\right)^{t^2}.
		\end{align*}
	\end{lemma}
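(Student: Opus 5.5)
The plan is to compute $F_{T,L}(a,0)$ in closed form by integrating out one Gaussian vector at a time. The key observation is that, once $b=0$, the exponent is linear in $Y$ given $X$. Conditioning on $X$ and integrating over $Y$ with the Gaussian characteristic function $\E_Y e^{\ii Y^*v}=e^{-\|v\|^2/2}$ gives
\[
F_{T,L}(a,0)=\E_X\exp\Bigl(-\tfrac12\|T^*X+La\|^2\Bigr).
\]
This is already the expectation of a strictly positive random variable, so $F_{T,L}(a,0)>0$.

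Next I will evaluate this remaining real Gaussian integral explicitly. Set $c=La\in\R^d$ and $M=I+TT^*\succ0$. Completing the square in $X$ gives
\[
F_{T,L}(a,0)=\det(M)^{-1/2}\exp\Bigl(-\tfrac12 c^*\bigl(I-T^*M^{-1}T\bigr)c\Bigr).
\]
The push-through identity $T^*(I+TT^*)^{-1}=(I+T^*T)^{-1}T^*$ simplifies the matrix in the exponent:
\[
I-T^*M^{-1}T=(I+T^*T)^{-1}.
\]
Hence
\[
F_{T,L}(a,0)=F_{T,L}(0,0)\exp\bigl(-\tfrac12 Q(a)\bigr),\qquad Q(a)=a^*L^*(I+T^*T)^{-1}La,
\]
where $F_{T,L}(0,0)=\det(I+TT^*)^{-1/2}>0$. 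Since $Q$ is a quadratic form, $Q(ta)=t^2Q(a)$. It follows that
\[
\frac{F_{T,L}(ta,0)}{F_{T,L}(0,0)}=e^{-t^2Q(a)/2}=\left(\frac{F_{T,L}(a,0)}{F_{T,L}(0,0)}\right)^{t^2},
\]
with both sides positive, so the real power is well defined.

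The $b$ case follows by symmetry. The exponent $X^*TY+X^*Lb$ equals $Y^*T^*X+X^*Lb$, so exchanging the roles of $X$ and $Y$ (which are i.i.d.) reduces it to the previous computation with $T$ replaced by $T^*$. The result is again positivity together with a quadratic form $b^*L^*(I+TT^*)^{-1}Lb$ in the exponent. The main point needing care is the Gaussian integral: one must check that $M$ is positive definite so the integral converges, which is immediate, and the determinant prefactor cancels in the ratios anyway. I do not expect any step to be a genuine obstacle; the whole lemma reduces to the fact that a Gaussian integral of $\exp(\text{quadratic}+\text{linear})$ is $\exp$ of a quadratic form in the linear coefficient.
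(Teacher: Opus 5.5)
Your proof is correct and is essentially the paper's argument: both derive $F_{T,L}(a,0)=\det(I+TT^*)^{-1/2}\exp\bigl(-\tfrac12 a^*L^*(I+T^*T)^{-1}La\bigr)$ by Gaussian integration, read off positivity and $t^2$-scaling from it, and handle $b$ by the $X\leftrightarrow Y$ symmetry. The only difference is the order of integration. The paper integrates out $X$ first, leaving $\E_Y\exp(-\tfrac12\|TY\|^2+\ii Y^*La)$, which the damped Gaussian characteristic-function formula evaluates directly without a push-through identity. Your choice of integrating out $Y$ first makes positivity immediate, but then requires completing the square and the identity $I-T^*(I+TT^*)^{-1}T=(I+T^*T)^{-1}$.
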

	
	\begin{proof}
		Averaging first over $X$ in the first expression and over $Y$ in the second
		gives
		\begin{align*}
			F_{T,L}(a,0)
			&=\E_Y\exp\left(-\frac12\lVert TY\rVert^2+\ii Y^*La\right),\\
			F_{T,L}(0,b)
			&=\E_X\exp\left(-\frac12\lVert T^*X\rVert^2+\ii X^*Lb\right).
		\end{align*}
		For a standard Gaussian vector $Z$, a positive semidefinite matrix $Q$, and
		a real vector $h$, the centered Gaussian formula is
		\[
		\E\exp\left(-\frac12Z^*QZ+\ii h^*Z\right)
		=\det(I+Q)^{-1/2}
		\exp\left(-\frac12h^*(I+Q)^{-1}h\right).
		\]
		Indeed, the quadratic weight changes the standard Gaussian density into
		$\det(I+Q)^{-1/2}$ times the density of a centered Gaussian with covariance
		$(I+Q)^{-1}$.  Applying this formula to the preceding expectations and using
		$\det(I+T^*T)=\det(I+TT^*)$ gives
		\[
		F_{T,L}(0,0)
		=\det(I+T^*T)^{-1/2}
		=\det(I+TT^*)^{-1/2}>0.
		\]
		Dividing by this common factor gives
		\begin{align*}
			\frac{F_{T,L}(a,0)}{F_{T,L}(0,0)}
			&=\exp\left(-\frac12a^*L^*(I+T^*T)^{-1}La\right),\\
			\frac{F_{T,L}(0,b)}{F_{T,L}(0,0)}
			&=\exp\left(-\frac12b^*L^*(I+TT^*)^{-1}Lb\right).
		\end{align*}
		These formulas prove strict positivity, and replacing $a,b$ by $ta,tb$ gives
		the two scaling identities.
	\end{proof}
	
	The key interpolation uses orthogonal invariance in $\R^{2d}$.  Its
	coefficients can be parameterized by
	\[
	(\sqrt\theta,\sqrt{1-\theta})=(\cos\alpha,\sin\alpha),
	\qquad 0\le\alpha\le\frac\pi2,
	\]
	which places them on the unit quarter-circle.
	
	\begin{lemma}[Gaussian geometric interpolation]
		\label{lem:gaussian-geometric-interpolation}
		For every $T,L,a,b$ as above and $\theta\in[0,1]$,
		\[
		\left|F_{T,L}\left(
		\sqrt\theta\,a,\sqrt{1-\theta}\,b\right)\right|
		=F_{T,L}(a,0)^\theta F_{T,L}(0,b)^{1-\theta}.
		\]
	\end{lemma}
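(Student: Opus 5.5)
The plan is to compute both sides in closed form. I would treat the pair $(X,Y)$ as one standard Gaussian vector in $\R^{2d}$, so that $F_{T,L}$ becomes a Gaussian integral with a purely imaginary quadratic form. Then I would take the modulus and compare with the explicit formulas for $F_{T,L}(a,0)$ and $F_{T,L}(0,b)$ already obtained in the proof of Lemma~\ref{lem:one-sided-quadratic-scaling}.

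Concretely, I would write $Z=(X,Y)\in\R^{2d}$ and set
\[
M=\begin{pmatrix}0&T\\ T^*&0\end{pmatrix},\qquad
h=\begin{pmatrix}Lb'\\ La'\end{pmatrix},
\qquad a'=\sqrt\theta\,a,\quad b'=\sqrt{1-\theta}\,b .
\]
The exponent is then $\ii\bigl(\tfrac12 Z^*MZ+h^*Z\bigr)$, with $M$ real symmetric. The first step is the oscillatory Gaussian formula
\[
\E\exp\Bigl(\tfrac{\ii}{2}Z^*MZ+\ii h^*Z\Bigr)=\det(I-\ii M)^{-1/2}\exp\Bigl(-\tfrac12 h^*(I-\ii M)^{-1}h\Bigr).
\]
To justify it, I would diagonalize $M$ by an orthogonal matrix; this preserves the law of $Z$. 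The integral then factors into one-dimensional integrals $\E e^{\ii\lambda z^2/2+\ii\eta z}=(1-\ii\lambda)^{-1/2}e^{-\eta^2/(2(1-\ii\lambda))}$. Each of these follows by analytic continuation in $\lambda$ from the real-exponent case used in Lemma~\ref{lem:one-sided-quadratic-scaling}, or by completing the square directly. The branch of the square root will be irrelevant, because only the modulus is needed.

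Next I would take absolute values. Since $M$ is real symmetric, $(I-\ii M)^{-1}=(I+\ii M)(I+M^2)^{-1}$, and $h$ is real. Hence $\RePart h^*(I-\ii M)^{-1}h=h^*(I+M^2)^{-1}h$. Also $|\det(I-\ii M)|=\det(I+M^2)^{1/2}$. Moreover $M^2=\operatorname{diag}(TT^*,T^*T)$ is block diagonal, so $(I+M^2)^{-1}$ splits into the two blocks. This gives
\[
|F_{T,L}(a',b')|=\det(I+TT^*)^{-1/4}\det(I+T^*T)^{-1/4}\exp\Bigl(-\tfrac12\bigl[\theta\,a^*L^*(I+T^*T)^{-1}La+(1-\theta)\,b^*L^*(I+TT^*)^{-1}Lb\bigr]\Bigr).
\]
The prefactor equals $F_{T,L}(0,0)$, using $\det(I+T^*T)=\det(I+TT^*)$.

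Finally I would compare with the formulas from the proof of Lemma~\ref{lem:one-sided-quadratic-scaling}. There, $F_{T,L}(a,0)=F_{T,L}(0,0)e^{-A/2}$ and $F_{T,L}(0,b)=F_{T,L}(0,0)e^{-B/2}$, where $A$ and $B$ are exactly the two quadratic forms in the bracket above. Therefore
\[
F_{T,L}(a,0)^\theta F_{T,L}(0,b)^{1-\theta}=F_{T,L}(0,0)\,e^{-(\theta A+(1-\theta)B)/2},
\]
which is the displayed expression. The only delicate point is the rigorous justification of the complex Gaussian formula. Because the modulus removes any branch ambiguity, that reduces to a routine one-dimensional analytic-continuation argument. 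The block structure of $M^2$, which decouples the $a$ and $b$ contributions, is what makes the identity exact.
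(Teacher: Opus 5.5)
Your proof is correct, but it takes a different route from the paper. You evaluate the two-sided quantity in closed form as an oscillatory Gaussian integral, $F_{T,L}(a',b')=\det(I-\ii M)^{-1/2}\exp\bigl(-\tfrac12h^*(I-\ii M)^{-1}h\bigr)$. You then read off the modulus from three facts: $|\det(I-\ii M)|=\det(I+M^2)^{1/2}$, $\RePart\,h^*(I-\ii M)^{-1}h=h^*(I+M^2)^{-1}h$, and the block-diagonal form of $M^2$. All of these steps check out, and you correctly match the prefactor with $F_{T,L}(0,0)$ and the two quadratic forms with the exponents in Lemma~\ref{lem:one-sided-quadratic-scaling}.

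The paper never computes $F_{T,L}$ in closed form when both arguments are nonzero. It first proves the equal-weight case $\theta=1/2$ probabilistically, writing $|\E U|^2=\E[U\overline{U'}]$ with an independent copy. It then rotates to $X_\pm=(X\pm X')/\sqrt2$ and $Y_\pm=(Y\pm Y')/\sqrt2$, which splits $U\overline{U'}$ into a function of $(X_+,Y_-)$ times a function of $(X_-,Y_+)$, each of one-sided form. General $\theta$ then follows from the $t^2$ scaling law of Lemma~\ref{lem:one-sided-quadratic-scaling}.

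Your approach handles all $\theta$ in a single computation, skips the reduction to $\theta=1/2$, and even gives the phase of $F_{T,L}$. The price is that it needs complex-coefficient Gaussian integrals, justified by analytic continuation. The paper uses only real-exponent Gaussian formulas plus orthogonal invariance in $\R^{2d}$, which makes the decoupling mechanism explicit. The two arguments are dual: the factorization $(I-\ii M)(I+\ii M)=I+M^2$ with $M^2$ block diagonal is the algebraic counterpart of the independence of $(X_+,Y_-)$ and $(X_-,Y_+)$.
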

	
	\begin{proof}
		Fix $T,L$.
		We first prove the equal-weight ($\theta = 1/2$) identity
		\[
		\left|F_{T,L}\left(\frac a{\sqrt2},\frac b{\sqrt2}\right)\right|^2
		=F_{T,L}(a,0)F_{T,L}(0,b).
		\]
		If $U'$ is an independent copy of a complex random variable $U$, then
		$|\E U|^2=\E[U\overline{U'}]$.  Apply this with
		\[
		U=\exp\left(\ii(X^*TY+X^*Lb/\sqrt2+Y^*La/\sqrt2)\right).
		\]
		Let $(X',Y')$ be an independent copy of $(X,Y)$, and set
		\[
		X_\pm=\frac{X\pm X'}{\sqrt2},
		\qquad
		Y_\pm=\frac{Y\pm Y'}{\sqrt2}.
		\]
		Applied separately to $(X,X')$ and $(Y,Y')$, this is an orthogonal
		transformation of $\R^{2d}$.  Thus $X_+,X_-,Y_+,Y_-$ are independent standard
		Gaussian vectors, and under this change of variables,
		\begin{align*}
			X^*TY-X'^*TY'&=X_+^*TY_-+X_-^*TY_+,\\
			\frac{(X-X')^*Lb}{\sqrt2}&=X_-^*Lb,\\
			\frac{(Y-Y')^*La}{\sqrt2}&=Y_-^*La.
		\end{align*}
		Combining these formulas shows
		\[
		U\overline{U'} = \exp(\ii[(X_+^*TY_-+Y_-^*La)+(X_-^*TY_++X_-^*Lb)]).
		\]
		The two summands depend on the independent pairs $(X_+,Y_-)$ and
		$(X_-,Y_+)$, respectively.  Therefore the expectation factors, giving
		\[
		\left|F_{T,L}\left(\frac a{\sqrt2},\frac b{\sqrt2}\right)\right|^2
		=F_{T,L}(a,0)F_{T,L}(0,b),
		\]
		as claimed.
		
		Apply the equal-weight identity with
		$a$ replaced by $\sqrt{2\theta}\,a$ and $b$ by
		$\sqrt{2(1-\theta)}\,b$, and then use
		Lemma~\ref{lem:one-sided-quadratic-scaling}.  This gives
		\begin{align*}
			\left|F_{T,L}\left(\sqrt\theta\,a,
			\sqrt{1-\theta}\,b\right)\right|^2
			&=F_{T,L}(\sqrt{2\theta}\,a,0)
			F_{T,L}(0,\sqrt{2(1-\theta)}\,b)\\
			&=F_{T,L}(0,0)^2
			\left(\frac{F_{T,L}(a,0)}{F_{T,L}(0,0)}\right)^{2\theta}
			\left(\frac{F_{T,L}(0,b)}{F_{T,L}(0,0)}\right)^{2(1-\theta)}\\
			&=F_{T,L}(a,0)^{2\theta}
			F_{T,L}(0,b)^{2(1-\theta)}.
		\end{align*}
		The last equality uses $2\theta+2(1-\theta)=2$.  Taking square roots proves
		the result.
	\end{proof}
	
	\begin{corollary}[Gaussian two-coordinate compression]
		\label{cor:gaussian-two-coordinate-compression}
		Let $T\in\R^{d\times d}$ and $L\in\R^{d\times k}$ be jointly random
		matrices.  For every $a,b\in\R^k$ and $\theta\in[0,1]$,
		\begin{align*}
			&\E_{T,L}\left|F_{T,L}\left(
			\sqrt\theta\,a,\sqrt{1-\theta}\,b\right)\right|\\
			&\qquad\le
			\bigl(\E_{T,L}F_{T,L}(a,0)\bigr)^\theta
			\bigl(\E_{T,L}F_{T,L}(0,b)\bigr)^{1-\theta}\\
			&\qquad\le
			\max\left\{
			\E_{T,L}F_{T,L}(a,0),
			\E_{T,L}F_{T,L}(0,b)
			\right\}.
		\end{align*}
	\end{corollary}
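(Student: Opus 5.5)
The plan is to apply Lemma~\ref{lem:gaussian-geometric-interpolation} pointwise in $(T,L)$, then average with H\"older's inequality, and finish with the elementary bound on a weighted geometric mean.

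First, condition on the random pair $(T,L)$. Since $X,Y$ are independent of $(T,L)$ in the definition of $F_{T,L}$, for every fixed realization Lemma~\ref{lem:gaussian-geometric-interpolation} gives
\[
\left|F_{T,L}\left(\sqrt\theta\,a,\sqrt{1-\theta}\,b\right)\right|
=F_{T,L}(a,0)^\theta F_{T,L}(0,b)^{1-\theta}.
\]
By Lemma~\ref{lem:one-sided-quadratic-scaling}, both factors on the right are strictly positive. Their explicit Gaussian formulas also show that they lie in $(0,1]$, so they are bounded measurable functions of $(T,L)$ and every expectation below is finite. Measurability is clear from the closed forms involving $\det(I+T^*T)$ and $(I+T^*T)^{-1}$.

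Next, take $\E_{T,L}$ of both sides. For $\theta\in(0,1)$, apply H\"older's inequality with exponents $1/\theta$ and $1/(1-\theta)$ to the nonnegative random variables $F_{T,L}(a,0)^\theta$ and $F_{T,L}(0,b)^{1-\theta}$. This gives
\[
\E_{T,L}\bigl[F_{T,L}(a,0)^\theta F_{T,L}(0,b)^{1-\theta}\bigr]
\le\bigl(\E_{T,L}F_{T,L}(a,0)\bigr)^\theta\bigl(\E_{T,L}F_{T,L}(0,b)\bigr)^{1-\theta},
\]
which is the first inequality. The endpoints $\theta\in\{0,1\}$ are trivial: the left side then equals exactly $\E F_{T,L}(0,b)$ or $\E F_{T,L}(a,0)$, with the convention $x^0=1$.

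For the second inequality, note that for $u,v\ge0$ we have $u^\theta v^{1-\theta}\le\max\{u,v\}^\theta\max\{u,v\}^{1-\theta}=\max\{u,v\}$.

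No step is a genuine obstacle. The only point needing care is that the pointwise identity requires the Gaussians $X,Y$ to be independent of the random $(T,L)$, so that conditioning reduces exactly to the deterministic lemma, together with the positivity that makes the real powers well defined.
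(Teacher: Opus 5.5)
Your proposal is correct and follows the paper's proof: the pointwise identity from Lemma~\ref{lem:gaussian-geometric-interpolation}, averaged with H\"older's inequality (with the endpoints $\theta\in\{0,1\}$ handled directly), and then the fact that a weighted geometric mean is at most its larger endpoint. Your extra remarks on positivity, boundedness, and measurability of $F_{T,L}$ via its closed form are harmless bookkeeping that the paper leaves implicit.
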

	
	\begin{proof}
		The first inequality follows from
		Lemma~\ref{lem:gaussian-geometric-interpolation} and H\"older's inequality;
		the endpoint cases are immediate.  The second says that a weighted geometric
		mean is at most its larger endpoint.
	\end{proof}
	
	The other real input turns Fourier domination of random vectors into matrix
	Laplace domination of their outer products.
	
	\begin{lemma}[Fourier-to-Laplace comparison]
		\label{lem:fourier-outer-products}
		Let $X,Y$ be random column vectors in $\R^d$.  Suppose
		\[
		\RePart\E e^{\ii t^*X}
		\le \RePart\E e^{\ii t^*Y}
		\qquad(t\in\R^d).
		\]
		Then
		\[
		YY^*\preceq_{\mathrm{Lt}}XX^*.
		\]
	\end{lemma}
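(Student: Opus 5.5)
The plan is to write the matrix Laplace transform of an outer product as a Gaussian average of characteristic functions, and then integrate the assumed pointwise inequality against that Gaussian weight. For real $d\times d$ matrices $T\succeq0$ and a vector $v\in\R^d$, we have $\operatorname{tr}(Tvv^*)=v^*Tv=\|T^{1/2}v\|^2$. The Gaussian identity
\[
e^{-\frac12\|u\|^2}=\E_{g}e^{\ii g^*u},
\]
with $g$ a standard Gaussian vector in $\R^d$, therefore gives
\[
e^{-\frac12 v^*Tv}=\E_g e^{\ii g^*T^{1/2}v}.
\]
Replacing $T$ by $2T$ covers the normalization in \eqref{eq:matrix-laplace-order}, since $T$ ranges over the whole PSD cone. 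So for each fixed $T\succeq0$ I will show
\[
\E e^{-\operatorname{tr}(TXX^*)}\le \E e^{-\operatorname{tr}(TYY^*)}.
\]

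First I substitute $v=X$ into the identity, with $g$ independent of $X$, and apply Fubini; this is legitimate because the integrand is bounded by $1$. With $S=(2T)^{1/2}$, this gives
\[
\E e^{-\operatorname{tr}(TXX^*)}=\E_g\,\E_X e^{\ii (Sg)^*X}=\E_g\,\varphi_X(Sg),
\]
where $\varphi_X$ is the characteristic function of $X$. The left side is real, so it equals $\E_g\RePart\varphi_X(Sg)$.

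Next I apply the hypothesis pointwise at $t=Sg$: $\RePart\varphi_X(Sg)\le\RePart\varphi_Y(Sg)$. Averaging over $g$ and reversing the same identity for $Y$ yields
\[
\E e^{-\operatorname{tr}(TXX^*)}\le\E e^{-\operatorname{tr}(TYY^*)}.
\]
This is exactly $YY^*\preceq_{\mathrm{Lt}}XX^*$ in the sense of \eqref{eq:matrix-laplace-order}: the real part is redundant for real symmetric matrices, and the outer products are PSD.

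The only point needing care is that the matrices in the order are real symmetric, because the lemma is stated over $\R^d$. Later applications over $\mathbb K$ identify $\mathbb K^m$ with $\R^{\beta m}$. In that setting $\RePart\operatorname{tr}(Tvv^*)=\RePart\, v^*Tv$ is a real PSD quadratic form in the real coordinates, so the same argument goes through. There is no genuine obstacle here; the step I would double-check most carefully is the use of Fubini together with taking real parts.
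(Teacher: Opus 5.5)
Your proof is correct and is essentially the paper's argument: both write $e^{-x^*Tx}$ as a Gaussian average of the characters $e^{\ii t^*x}$ and integrate the pointwise real-part hypothesis against that nonnegative weight. The only difference is minor. The paper uses the Lebesgue density proportional to $e^{-t^*Q^{-1}t/4}$, which requires $Q\succ0$ and a final $Q+\varepsilon I$ limit, whereas your pushforward $t=(2T)^{1/2}g$ handles singular $T$ directly.
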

	
	\begin{proof}
		Fix a positive definite real symmetric matrix $Q$.  Fourier inversion gives
		\[
		e^{-x^*Qx}
		=\frac{1}{(4\pi)^{d/2}\sqrt{\det Q}}
		\int_{\R^d}
		e^{-t^*Q^{-1}t/4}e^{\ii t^*x}\dd t.
		\]
		The Fourier kernel is nonnegative and integrable.  Averaging at $x=X$, taking
		real parts, and applying the assumed comparison therefore gives
		\[
		\E e^{-X^*QX}\le \E e^{-Y^*QY}.
		\]
		Since $X^*QX=\operatorname{tr}(QXX^*)$, this is the defining Laplace
		comparison for positive definite tests.  Replacing a positive semidefinite
		$Q$ by $Q+\varepsilon I$ and letting $\varepsilon\downarrow0$ completes the
		proof.
	\end{proof}
	
	\subsection{Fourier compression for permanental cofactors}
	
	Fix $\mathbb K\in\{\R,\C,\Hh\}$, put $\beta=\dim_{\R}\mathbb K$, and let
	$n\ge2$.  Let $B$ be an $(n-1)\times n$ standard $\mathbb K$-Gaussian
	matrix and define its permanental cofactor column by
	\[
	(C_n^{\mathbb K})_j=\per_{\mathbb K}(B_{-j}),
	\qquad 1\le j\le n,
	\qquad
	C_1^{\mathbb K}=(1).
	\]
	Column-permutation invariance of $\per_{\mathbb K}$ makes the coordinates of
	$C_n^{\mathbb K}$ exchangeable.
	When $n$ and $\mathbb K$ are fixed, write $C=C_n^{\mathbb K}$.
	
	\begin{proposition}[Fourier coordinate compression]\label{prop:compression}
		Let $n \ge 2$ and define
		\[
		\varphi_n^{\mathbb K}(z)
		:=\E e^{\ii(z,C)_{\R}}
		=\E\exp\left(\ii\RePart
		\sum_{j=1}^n\overline{z_j}C_j\right).
		\]
		For every $z\in\mathbb K^n$,
		\begin{equation}\label{eq:full-radial-domination}
			0\le\varphi_n^{\mathbb K}(z)
			\le\varphi_n^{\mathbb K}(\|z\|_2e_1).
		\end{equation}
	\end{proposition}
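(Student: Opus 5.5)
The plan is to prove the two inequalities in \eqref{eq:full-radial-domination} separately. Nonnegativity will come from conditioning on all but one row. The upper bound will come from repeatedly applying Corollary~\ref{cor:gaussian-two-coordinate-compression} to pairs of columns of $B$. Each application moves all the mass of $z$ from one coordinate into another, does not increase the norm, and does not decrease the characteristic function. After $n-1$ such moves, $z$ is supported on one coordinate, and exchangeability together with a phase invariance finishes the argument.

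\emph{Positivity.} Write $r$ for the first row of $B$. Since the permanent is row-ordered, $r$ is the leftmost factor in every product. Hence each $C_j=\sum_k b_{1k}\per_{\mathbb K}(B_{-1,-\{j,k\}})$ is a real-linear function of $r$, with coefficients depending only on the other rows. So $(z,C)_{\R}=(r,v)_{\R}$ for some random $v\in\mathbb K^n$ that is independent of $r$. Conditionally on the other rows, the Gaussian formula gives $\E e^{\ii(r,v)_\R}=\exp(-\|v\|^2/(2\beta))>0$, and averaging yields $\varphi_n^{\mathbb K}(z)>0$. This works for every $n\ge2$, since $B$ has at least one row. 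As a consequence, $\varphi_n^{\mathbb K}$ is real.

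\emph{Two-coordinate compression.} Fix $i\ne j$ and condition on all columns of $B$ other than columns $i,j$. Call those two columns $X,Y\in\mathbb K^{n-1}\cong\R^{\beta(n-1)}$. Then:
\begin{itemize}
\item $C_j$ involves column $i$ but not column $j$, so it is real-linear in $X$;
\item $C_i$ is real-linear in $Y$;
\item every $C_k$ with $k\ne i,j$ is real-bilinear in $(X,Y)$.
\end{itemize}
The key structural point is that the same real-linear map appears in both linear cofactors. Expanding along the slot of column $i$ (respectively $j$), $C_j=\sum_r \per(\cdots X_r\cdots)$ and $C_i=\sum_r\per(\cdots Y_r\cdots)$ place $X_r$, respectively $Y_r$, in exactly the same position of the same row-ordered products with the remaining $n-2$ columns. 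Hence $C_j=\Lambda X$ and $C_i=\Lambda Y$ for one random real-linear $\Lambda$.

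It follows that
\[
(z,C)_\R=X^*TY+X^*Lz_i+Y^*Lz_j
\]
for a suitable real $T$ and $L\in\R^{\beta(n-1)\times\beta}$, with $z_i,z_j\in\R^\beta$. The conditional characteristic function is therefore exactly $F_{T,L}(z_j,z_i)$. Set $\rho=(|z_i|^2+|z_j|^2)^{1/2}$, and write $z_j=\sqrt\theta\,a$, $z_i=\sqrt{1-\theta}\,b$ with $|a|=|b|=\rho$. Then $\varphi(z)\le\E|F_{T,L}(\sqrt\theta a,\sqrt{1-\theta}b)|$, and Corollary~\ref{cor:gaussian-two-coordinate-compression} bounds this by
\[
\max\{\E F_{T,L}(a,0),\E F_{T,L}(0,b)\}.
\]
These two quantities are $\varphi$ evaluated at $z$ with $(z_i,z_j)$ replaced by $(0,a)$ or by $(b,0)$. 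Both vectors have norm $\|z\|_2$ and one fewer nonzero coordinate.

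\emph{Conclusion and main obstacle.} Iterating the compression step at most $n-1$ times gives $\varphi(z)\le\varphi(w e_m)$ for some index $m$ and some $w\in\mathbb K$ with $|w|=\|z\|_2$. Exchangeability of the coordinates moves $m$ to $1$. To remove the phase, note that left-multiplying row $1$ of $B$ by a unit $u$ preserves the law of $B$ and sends $C_1\mapsto uC_1$. Since $\RePart(\bar w uC_1)=\RePart(\overline{\bar u w}\,C_1)$, the value $\varphi(we_1)$ depends only on $|w|$, which gives $\varphi(\|z\|_2e_1)$.

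The step I expect to need the most care is the structural claim that $C_i$ and $C_j$ are given by one common real-linear map $L$ acting on $Y$ and $X$ respectively, with the bilinear part of the form $X^*TY$. In the quaternion case the order of multiplication must be tracked explicitly through the identification $\mathbb K^{n-1}\cong\R^{\beta(n-1)}$, including how the left factor $\overline{z_j}$ combines with $\Lambda$ into a real pairing.
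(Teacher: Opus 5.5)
Your proposal is correct and follows essentially the same route as the paper. Nonnegativity comes from conditioning on a single Gaussian row of $B$ (you use the first row, the paper the last, which is immaterial); the upper bound comes from two-column conditioning into the $F_{T,L}$ form with one common linear map for both linear cofactors, then Corollary~\ref{cor:gaussian-two-coordinate-compression} iterated, then exchangeability and unit-scalar invariance. Two cosmetic slips do not affect the argument: because the columns have real covariance $I/\beta$, the conditional characteristic function is $F_{T/\beta,L/\sqrt\beta}$ rather than $F_{T,L}$; and because $C_j$ is linear in $X$, the linear terms should read $X^*Lz_j+Y^*Lz_i$, with the conjugation absorbed into $L$.
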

	
	\begin{proof}
		Put $w=\overline z$.  The sum $\sum_jw_jC_j$ is $\per_{\mathbb K}$ of the
		matrix obtained by placing $w$ above $B$.  Since $n\ge2$, the matrix $B$ has
		at least one Gaussian row.  Condition on all rows except its last row
		$h=(h_1,\ldots,h_n)$.  The permanent has the form
		\[
		\sum_{j=1}^nD_jh_j
		\]
		for conditional coefficients $D_j\in\mathbb K$.\footnote{For
			$\mathbb K=\Hh$, left and right multiplication by a fixed quaternion $q$ are
			scaled orthogonal maps of $\Hh\simeq\R^4$, since
			$|qx|=|xq|=|q||x|$.}  Conditional on the other rows, the $D_j$ are fixed.
		For fixed $D_j$, the real random variable $\RePart(D_jh_j)$ is centered
		Gaussian with variance $|D_j|^2/\beta$: multiplication by $D_j$ is a scaled
		orthogonal map on the underlying real space.  These variables are independent
		over $j$, and hence
		\[
		\E_h\exp\left(\ii\RePart\sum_{j=1}^nD_jh_j\right)
		=\exp\left(-\frac1{2\beta}\sum_{j=1}^n|D_j|^2\right)\ge0.
		\]
		Averaging proves the lower bound in
		\eqref{eq:full-radial-domination}.  The restriction $n\ge2$ is genuine here:
		at $n=1$ there is no Gaussian row to average over, and $C_1^{\mathbb K}=(1)$
		has characteristic function $e^{\ii\RePart z}$, which need not be nonnegative.
		
		For the upper bound, if $z$ has at most one nonzero coordinate, column
		exchangeability and unit-scalar invariance give the result
		directly.\footnote{For $\mathbb K=\Hh$, left-multiplying the first row of
			$B$ by a unit quaternion preserves its law and left-multiplies every
			coordinate of $C_n^{\mathbb K}$ by that quaternion.}
		Otherwise choose two nonzero coordinates and relabel them as the first two.
		Write the first two columns of $B$ as $X,Y\in\mathbb K^{n-1}$ and denote the
		remaining columns, indexed by $3,\ldots,n$, by $R$.  Expanding the permanent
		along the first row $w$ gives
		\begin{align}
			\sum_{j=1}^n w_jC_j
			&=w_1\per_{\mathbb K}[Y,R]+w_2\per_{\mathbb K}[X,R]
			+\sum_{j=3}^n w_j\per_{\mathbb K}[X,Y,R_{-j}].
			\label{eq:two-column-expansion}
		\end{align}
		Identify $\mathbb K^{n-1}$ with $\R^d$, where $d=\beta(n-1)$, and
		$\mathbb K$ with $\R^\beta$.  For fixed $R$, let
		$T_R\in\R^{d\times d}$ and $L_R\in\R^{d\times\beta}$ be characterized, for
		$U,V\in\mathbb K^{n-1}$ and $a\in\mathbb K$, by
		\begin{align*}
			(U,T_RV)_{\R}
			&=\RePart\sum_{j=3}^nw_j\per_{\mathbb K}[U,V,R_{-j}],\\
			(U,L_Ra)_{\R}
			&=\RePart\bigl(a\per_{\mathbb K}[U,R]\bigr).
		\end{align*}
		The real part of \eqref{eq:two-column-expansion} is
		\[
		(X,T_RY)_{\R}+(X,L_Rw_2)_{\R}+(Y,L_Rw_1)_{\R}.
		\]
		
		The real covariance of a standard $\mathbb K$-Gaussian is $I/\beta$.  After
		rescaling $X,Y$ by $\sqrt\beta$, the conditional expectation over $X,Y$ is
		$F_{T_R/\beta,L_R/\sqrt\beta}(w_1,w_2)$.
		
		Let
		$r=\sqrt{|z_1|^2+|z_2|^2}=\sqrt{|w_1|^2+|w_2|^2}$.  Apply
		Corollary~\ref{cor:gaussian-two-coordinate-compression} to the random pair
		$(T_R/\beta,L_R/\sqrt\beta)$, with endpoint vectors $rw_1/|w_1|$ and
		$rw_2/|w_2|$ and interpolation parameter $|w_1|^2/r^2$.  The conditional
		representation above gives $\varphi_n^{\mathbb K}(z)=\E_R F_{T_R/\beta,L_R/\sqrt\beta}(w_1,w_2)$,
		which is nonneg\-ative by the lower bound already proved.  Therefore,
		\[
		\varphi_n^{\mathbb K}(z)
		=\left|\E_R F_{T_R/\beta,L_R/\sqrt\beta}(w_1,w_2)\right|
		\le\E_R\left|F_{T_R/\beta,L_R/\sqrt\beta}(w_1,w_2)\right|.
		\]
		The corollary bounds the right side by the larger of its two endpoint
		expectations.  The endpoint vectors $rw_j/|w_j|$ correspond to arguments
		$rz_j/|z_j|$ in $\varphi_n^{\mathbb K}$ since $w=\overline z$.
		These are precisely the characteristic-function values in
		\[
		\varphi_n^{\mathbb K}(z)
		\le
		\max\left\{
		\varphi_n^{\mathbb K}
		\left(r\frac{z_1}{|z_1|},0,z_3,\ldots,z_n\right),
		\varphi_n^{\mathbb K}
		\left(0,r\frac{z_2}{|z_2|},z_3,\ldots,z_n\right)
		\right\}.
		\]
		Thus one of these two replacements does not decrease the characteristic
		function and reduces the number of nonzero coordinates by one.  Iterating
		leaves one coordinate of modulus $\|z\|_2$.  Column exchangeability and the
		same unit-scalar invariance identify the resulting value with
		$\varphi_n^{\mathbb K}(\|z\|_2e_1)$.
	\end{proof}
	
	\subsection{The gamma form of the cofactor induction}
	
	\begin{theorem}[Gaussian cofactor induction]
		\label{thm:gaussian-cofactor-gamma}
		Let $\mathbb K\in\{\R,\C,\Hh\}$ and $\beta=\dim_{\R}\mathbb K$.
		For each $m\ge1$, let $g_m^{\mathbb K}$ be a standard
		$\mathbb K$-Gaussian column in $\mathbb K^m$, independent of the gamma
		variables.  Then, for every $n\ge1$,
		\begin{equation}\label{eq:permanental-gamma-matrix}
			\Pi_{n-1,\beta}g_n^{\mathbb K}(g_n^{\mathbb K})^*
			\preceq_{\mathrm{Lt}}
			C_n^{\mathbb K}(C_n^{\mathbb K})^*.
		\end{equation}
		Moreover,
		\begin{equation}\label{eq:cofactor-gamma-order}
			\Pi_{n,\beta}
			\le_{\mathrm{Lt}}\|C_n^{\mathbb K}\|_2^2.
		\end{equation}
		Finally, for every $z\in\mathbb K^{n+1}$,
		\begin{equation}\label{eq:permanental-gamma-fourier}
			0\le \E e^{\ii(z,C_{n+1}^{\mathbb K})_{\R}}
			\le \E e^{\ii(z,\sqrt{\Pi_{n,\beta}}\,
				g_{n+1}^{\mathbb K})_{\R}}.
		\end{equation}
	\end{theorem}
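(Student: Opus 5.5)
We argue by induction on $n$, proving the three statements in the cyclic order \eqref{eq:cofactor-gamma-order} at level $n$ $\Rightarrow$ \eqref{eq:permanental-gamma-fourier} at level $n$ $\Rightarrow$ \eqref{eq:permanental-gamma-matrix} at level $n+1$ $\Rightarrow$ \eqref{eq:cofactor-gamma-order} at level $n+1$. The base case $n=1$ is trivial. There $C_1^{\mathbb K}=(1)$ and $\Pi_{0,\beta}=\Pi_{1,\beta}=1$. Since $g_1g_1^*=|g_1|^2$, \eqref{eq:permanental-gamma-matrix} reads $\E e^{-t|g_1|^2}\ge e^{-t}$, which is Jensen's inequality because $\E|g_1|^2=1$. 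Also \eqref{eq:cofactor-gamma-order} is an equality at $n=1$.

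\emph{Step 1: norm order gives Fourier domination.} Assume $\Pi_{n,\beta}\le_{\mathrm{Lt}}\|C_n\|_2^2$. Take $z\in\mathbb K^{n+1}$. By Proposition~\ref{prop:compression} applied to $C_{n+1}$ (note $n+1\ge2$), we have $0\le\varphi_{n+1}(z)\le\varphi_{n+1}(\|z\|_2e_1)$. The first coordinate of $C_{n+1}$ is $\per_{\mathbb K}$ of an $n\times n$ Gaussian matrix. Expanding along its last row $h$, as in the proof of Proposition~\ref{prop:compression}, writes it as $\sum_j D_jh_j$, where the coefficient vector $D$ has the law of $C_n$, up to harmless unit-scalar and order conventions over $\Hh$. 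Averaging over $h$ gives
\[
\varphi_{n+1}(re_1)=\E\exp\bigl(-\tfrac{r^2}{2\beta}\|C_n\|_2^2\bigr).
\]
This is a Laplace transform at $s=r^2/(2\beta)$, so by the inductive hypothesis it is at most $\E\exp(-\tfrac{r^2}{2\beta}\Pi_{n,\beta})$. By rotation invariance of $g_{n+1}$, the latter equals $\E e^{\ii(z,\sqrt{\Pi_{n,\beta}}g_{n+1})_{\R}}$ for any $z$ with $\|z\|_2=r$. This proves \eqref{eq:permanental-gamma-fourier}. Both characteristic functions are real, because the right one is real and the left one is real and nonnegative.

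\emph{Step 2: Fourier domination gives the matrix order, then the norm order.} Identify $\mathbb K^{n+1}$ with $\R^{\beta(n+1)}$ and apply Lemma~\ref{lem:fourier-outer-products} with $X=C_{n+1}$ and $Y=\sqrt{\Pi_{n,\beta}}g_{n+1}$. This yields $YY^*\preceq_{\mathrm{Lt}}XX^*$ in the real outer-product sense.

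To pass to \eqref{eq:permanental-gamma-matrix}, note that for a $\mathbb K$-Hermitian $T\succeq0$, the quantity $\RePart\operatorname{tr}(Tvv^*)=\RePart\langle v,Tv\rangle$ equals $v_{\R}^*T_{\R}v_{\R}$. Here $T_{\R}$ is the real-symmetric PSD realification of $T$. So the matrix order over $\mathbb K$ is a special case of the real one, and \eqref{eq:permanental-gamma-matrix} holds at level $n+1$.

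Taking $T=sI$ gives $\Pi_{n,\beta}\|g_{n+1}\|_2^2\le_{\mathrm{Lt}}\|C_{n+1}\|_2^2$. Since $\|g_{n+1}\|_2^2\stackrel{\mathrm d}=\gamma_{n+1,\beta}$, independent of $\Pi_{n,\beta}$, the left side is $\Pi_{n+1,\beta}$. This closes the induction.

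\emph{Main obstacle.} The delicate point is Step 1's identification of $\varphi_{n+1}(re_1)$ with the Laplace transform of $\|C_n\|_2^2$. Over $\Hh$, one must check that conditioning on all but one row of the $n\times n$ block leaves a linear form $\sum_jD_jh_j$ whose coefficient norm $\sum_j|D_j|^2$ has exactly the law of $\|C_n^{\Hh}\|_2^2$. Noncommutativity and row ordering make this nontrivial. I would expand along the \emph{last} row, so that $D_j$ is the row-ordered permanent of the top $n-1$ rows with column $j$ deleted, which is precisely $(C_n)_j$. The factor $\bar z_1$ multiplies on the left, and $|\bar z_1D_j|=|z_1||D_j|$ by the footnote in Proposition~\ref{prop:compression}. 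With that settled, the remaining steps are direct applications of the results stated earlier.
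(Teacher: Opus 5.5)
Your proposal is correct and follows the paper's proof essentially step for step. It uses the same induction cycle: scalar Laplace order at $n$, then Proposition~\ref{prop:compression} with the one-coordinate Gaussian computation for the Fourier bound, then Lemma~\ref{lem:fourier-outer-products} for the matrix order at $n+1$, then the trace test $T=sI$. It also has the same Jensen base case and the same restriction to $\mathbb K$-Hermitian tests; the only cosmetic difference is that you expand $(C_{n+1}^{\mathbb K})_1$ along its last row rather than its first, which works equally well because the coefficients are exactly the cofactor column of the top $(n-1)\times n$ block and $h_j\mapsto D_jh_j$ is a scaled orthogonal map.
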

	
	\begin{proof}
		At $n=1$, \eqref{eq:cofactor-gamma-order} is the equality
		$\Pi_{1,\beta}=\|C_1^{\mathbb K}\|_2^2=1$.  The matrix comparison there is
		$|g_1^{\mathbb K}|^2\le_{\mathrm{Lt}}1$, which follows from Jensen's
		inequality:
		\[
		\E e^{-s|g_1^{\mathbb K}|^2}\ge
		e^{-s\E|g_1^{\mathbb K}|^2}=e^{-s}.
		\]
		Suppose that the scalar
		comparison holds at dimension $n$.  Proposition~\ref{prop:compression}
		gives
		\[
		0\le \E e^{\ii(z,C_{n+1}^{\mathbb K})_{\R}}
		\le
		\E e^{\ii(\|z\|_2e_1,C_{n+1}^{\mathbb K})_{\R}}.
		\]
		The first coordinate of $C_{n+1}^{\mathbb K}$ has the law of
		$\per_{\mathbb K}G_n^{\mathbb K}$.  Expanding along its first row and
		conditioning on the remaining rows gives, for $r\ge0$,
		\[
		\E e^{\ii(re_1,C_{n+1}^{\mathbb K})_{\R}}
		=\E e^{-r^2\|C_n^{\mathbb K}\|_2^2/(2\beta)}.
		\]
		The induction hypothesis therefore bounds the last expression by
		\[
		\E e^{-\Pi_{n,\beta}\|z\|_2^2/(2\beta)}
		=\E e^{\ii(z,\sqrt{\Pi_{n,\beta}}\,
			g_{n+1}^{\mathbb K})_{\R}},
		\]
		proving \eqref{eq:permanental-gamma-fourier}.
		
		Apply Lemma~\ref{lem:fourier-outer-products} on the underlying real space
		with $X=C_{n+1}^{\mathbb K}$ and
		$Y=\sqrt{\Pi_{n,\beta}}\,g_{n+1}^{\mathbb K}$.  The Fourier comparison just
		proved is exactly its hypothesis, so the lemma gives
		\eqref{eq:permanental-gamma-matrix} in dimension
		$n+1$.\footnote{For $\mathbb K=\Hh$, restricting
			the real matrix tests to quaternionic Hermitian $T\succeq0$ is valid because
			$x\mapsto\RePart(x^*Tx)$ is a positive semidefinite real quadratic form and
			$\RePart\operatorname{tr}(Txx^*)=\RePart(x^*Tx)$, using cyclicity of the real
			trace.}  Taking the trace test (i.e., specializing $T=sI$ in
		\eqref{eq:matrix-laplace-order}) in \eqref{eq:permanental-gamma-matrix} and
		using
		\[
		\Pi_{n,\beta}\|g_{n+1}^{\mathbb K}\|_2^2
		\stackrel{\mathrm d}=\Pi_{n+1,\beta}
		\]
		proves \eqref{eq:cofactor-gamma-order} at dimension $n+1$ and closes the
		induction.
	\end{proof}
	
	\subsection{Determinant and Study determinant comparisons}
	\label{subsec:comparison-consequences}
	
	\begin{proof}[Proof of Theorem~\ref{thm:determinantal-cofactor-comparison}]
		At $n=1$, the matrix and determinant--permanent comparisons are equalities.
		For $n\ge2$, specialize Theorem~\ref{thm:gaussian-cofactor-gamma} to
		$\mathbb K=\F$, so $C_n^{\mathbb K}=C_n^{\mathrm{per}}$.
		The matrix identity \eqref{eq:det-cofactor-matrix-law} in
		Proposition~\ref{prop:determinant-benchmark} identifies the benchmark in
		\eqref{eq:permanental-gamma-matrix} with
		$C_n^{\det}(C_n^{\det})^*$, proving
		\eqref{eq:cofactor-matrix-comparison} at dimension $n$.
		
		For every $n\ge1$, the vector identity
		\eqref{eq:determinantal-gamma-vector} at dimension $n+1$ identifies the
		Gaussian benchmark in \eqref{eq:permanental-gamma-fourier} with
		$C_{n+1}^{\det}$.  The Fourier conclusion there therefore proves
		\eqref{eq:cofactor-fourier-comparison}.
		
		Finally, fix $n\ge2$.  Taking scalar trace tests in
		\eqref{eq:cofactor-matrix-comparison} gives
		\[
		\|C_n^{\det}\|_2^2
		\le_{\mathrm{Lt}}
		\|C_n^{\mathrm{per}}\|_2^2.
		\]
		Expansion along an independent standard Gaussian row gives
		\[
		|\det G_n|^2\stackrel{\mathrm d}=
		\gamma_{1,\beta}\|C_n^{\det}\|_2^2,
		\qquad
		|\per G_n|^2\stackrel{\mathrm d}=
		\gamma_{1,\beta}\|C_n^{\mathrm{per}}\|_2^2,
		\]
		with independent factors.  Multiplication by the common nonnegative gamma
		factor preserves scalar Laplace-transform order, proving
		\eqref{eq:determinant-permanent-comparison} at dimension $n$.
	\end{proof}
	
	\begin{theorem}[Study determinant--permanent comparison]
		\label{thm:quaternion-laplace}
		For every $n\ge1$,
		\begin{equation}\label{eq:quaternion-laplace-comparison}
			\Sdet(G_n^{\Hh})^2
			\le_{\mathrm{Lt}}|\per_{\Hh}G_n^{\Hh}|^2.
		\end{equation}
		Equivalently, $\Delta_n^{\Hh}\le_{\mathrm{Lt}}X_n^{\Hh}$.
	\end{theorem}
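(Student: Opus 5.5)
The plan is to combine the gamma benchmark for the Study determinant (Proposition~\ref{prop:study-determinant-benchmark}) with the scalar cofactor comparison \eqref{eq:cofactor-gamma-order} from Theorem~\ref{thm:gaussian-cofactor-gamma}, specialized to $\mathbb K=\Hh$ and $\beta=4$. The case $n=1$ is trivial: both sides have the law of $|g|^2$ for a standard quaternionic Gaussian $g$, since $\Sdet(g)=|g|$. So fix $n\ge2$.

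\textbf{Step 1: row expansion of the quaternionic permanent.} Realize $\per_{\Hh}G_n^{\Hh}$ by expanding along its first row $r=(r_1,\ldots,r_n)$, which is independent of the remaining $(n-1)\times n$ block $B$. Because the permanent is row-ordered, the first row's entries multiply on the left:
\[
\per_{\Hh}G_n^{\Hh}=\sum_{j=1}^n r_j\,C_j,
\]
where $C=C_n^{\Hh}$ is the permanental cofactor column of $B$. Condition on $B$. As in the proof of Proposition~\ref{prop:compression}, each map $x\mapsto xC_j$ is a scaled orthogonal map of $\R^4$ with factor $|C_j|$. Hence, conditionally, $\sum_j r_jC_j$ is a centered Gaussian on $\R^4$ with covariance $\|C\|_2^2 I/4$. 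That is, it has the law of $\|C\|_2\,g_1$, with $g_1$ a standard quaternionic Gaussian independent of $B$. This gives
\[
|\per_{\Hh}G_n^{\Hh}|^2\stackrel{\mathrm d}=\gamma_{1,4}\,\|C_n^{\Hh}\|_2^2,
\]
with independent factors, since $|g_1|^2\sim\operatorname{Gamma}(2,1/2)=\gamma_{1,4}$. The same conditional-Gaussian argument is what the proof of Theorem~\ref{thm:gaussian-cofactor-gamma} uses to compute $\E e^{\ii(re_1,C_{n+1})_\R}$, so it is already available.

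\textbf{Step 2: compare with the benchmark.} By \eqref{eq:cofactor-gamma-order}, $\Pi_{n,4}\le_{\mathrm{Lt}}\|C_n^{\Hh}\|_2^2$. Multiplication by an independent nonnegative factor $\gamma_{1,4}$ preserves scalar Laplace order: condition on $\gamma_{1,4}=u$ and apply the definition at $su$ in place of $s$. This yields
\[
\gamma_{1,4}\Pi_{n,4}\le_{\mathrm{Lt}}\gamma_{1,4}\|C_n^{\Hh}\|_2^2\stackrel{\mathrm d}=|\per_{\Hh}G_n^{\Hh}|^2.
\]
By \eqref{eq:study-gamma-product}, the left side has the law $\prod_{k=1}^n\gamma_{k,4}\stackrel{\mathrm d}=\Sdet(G_n^{\Hh})^2$. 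This proves \eqref{eq:quaternion-laplace-comparison}. Dividing both sides by $n!$ preserves the order, which gives the equivalent form $\Delta_n^{\Hh}\le_{\mathrm{Lt}}X_n^{\Hh}$.

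\textbf{Main obstacle.} The only delicate point is the noncommutative expansion in Step~1. One must check that the ordering convention places the first row on the left of each cofactor, and that the conditional law is the isotropic one. Both follow from $|xq|=|x||q|$, so that right multiplication by $C_j$ is a conformal linear map of $\R^4$. Everything else is already contained in Theorem~\ref{thm:gaussian-cofactor-gamma} and Proposition~\ref{prop:study-determinant-benchmark}.
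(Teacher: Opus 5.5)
Your proposal is correct and matches the paper's proof. Expanding along the first row gives $|\per_{\Hh}G_n^{\Hh}|^2\stackrel{\mathrm d}=\gamma_{1,4}\|C_n^{\Hh}\|_2^2$; multiplying the $\Hh$ case of \eqref{eq:cofactor-gamma-order} by an independent $\gamma_{1,4}$ and invoking \eqref{eq:study-gamma-product} then finishes, and your separate $n=1$ case is harmless since the same argument covers it with $C_1^{\Hh}=(1)$.
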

	
	\begin{proof}
		Expansion along the first row gives
		\[
		|\per_{\Hh}G_n^{\Hh}|^2
		\stackrel{\mathrm d}
		=\gamma_{1,4}\|C_n^{\Hh}\|_2^2,
		\]
		with independent factors on the right.  Multiplying the $\mathbb K=\Hh$
		case of \eqref{eq:cofactor-gamma-order} by an independent $\gamma_{1,4}$
		therefore gives
		\[
		\prod_{k=1}^n\gamma_{k,4}
		\le_{\mathrm{Lt}}|\per_{\Hh}G_n^{\Hh}|^2.
		\]
		By \eqref{eq:study-gamma-product}, the left side has the law of
		$\Sdet(G_n^{\Hh})^2$, proving
		\eqref{eq:quaternion-laplace-comparison}.
	\end{proof}
	
	\subsection{Gaussian anticoncentration}
	
	Let $\omega_\beta=\pi^{\beta/2}/\Gamma(\beta/2+1)$ be the volume of the unit
	ball in $\R^\beta$.
	With respect to Lebesgue measure on
	$\mathbb K\simeq\R^\beta$, the standard $\mathbb K$-Gaussian density is
	\[
	\left(\frac{\beta}{2\pi}\right)^{\beta/2}e^{-\beta|z|^2/2}.
	\]
	Put
	\[
	D_{n,\beta}=\prod_{k=2}^n\frac{\gamma_{k,\beta}}k
	=\frac{\Pi_{n,\beta}}{n!},
	\]
	where an empty product equals $1$.  Since
	$\Delta_n^{\mathbb K}\stackrel{\mathrm d}
	=D_{n,\beta}\gamma_{1,\beta}$ with independent factors,
	\[
	\E e^{-s\Delta_n^{\mathbb K}}
	=\E\left(1+\frac{2sD_{n,\beta}}\beta\right)^{-\beta/2}.
	\]
	For each $D_{n,\beta}>0$,
	\[
	\left(\frac{s}{\pi}\right)^{\beta/2}
	\left(1+\frac{2sD_{n,\beta}}\beta\right)^{-\beta/2}
	\uparrow
	\left(\frac{\beta}{2\pi}\right)^{\beta/2}D_{n,\beta}^{-\beta/2}
	\qquad(s\to\infty).
	\]
	Monotone convergence and the gamma moment formula therefore give
	\begin{equation}\label{eq:kappa-definition}
		\kappa_{n,\beta}
		:=\lim_{s\to\infty}
		\left(\frac{s}{\pi}\right)^{\beta/2}
		\E e^{-s\Delta_n^{\mathbb K}}
		=\left(\frac{\beta}{2\pi}\right)^{\beta/2}
		\E D_{n,\beta}^{-\beta/2}
		=\frac{(n!)^{\beta/2}}{\pi^{\beta/2}}
		\left(\frac\beta2\right)^{\beta n/2}
		\frac{\Gamma(\beta/2)}{\Gamma(\beta n/2)}.
	\end{equation}
	In the three cases of interest,
	\begin{equation}\label{eq:kappa-special-cases}
		\kappa_{n,1}=\frac{\sqrt{n!}}{2^{n/2}\Gamma(n/2)},
		\qquad
		\kappa_{n,2}=\frac n\pi,
		\qquad
		\kappa_{n,4}=\frac{2n}{\pi^2}\frac{4^n(n!)^2}{(2n)!}.
	\end{equation}
	Stirling's formula yields
	\begin{equation}\label{eq:kappa-asymptotic}
		\kappa_{n,\beta}\asymp_\beta n^{(\beta+2)/4}.
	\end{equation}
	The scalar Laplace comparison already gives centered anticoncentration by the
	Chernoff argument from the introduction.  More generally, suppose that $Y\ge0$
	satisfies $\Delta_n^{\mathbb K}\le_{\mathrm{Lt}}Y$.  For every
	$\varepsilon>0$, applying Markov's inequality with $s=\beta/(2\varepsilon^2)$,
	then the Laplace-transform comparison and the formula computed above, then
	$1+x\ge x$ for $x=D_{n,\beta}/\varepsilon^2>0$,
	\begin{align}
		\Pp(Y\le\varepsilon^2)
		&\le e^{\beta/2}\E e^{-\beta Y/(2\varepsilon^2)}\notag\\
		&\le e^{\beta/2}\E\left(1+\frac{D_{n,\beta}}{\varepsilon^2}\right)^{-\beta/2}\notag\\
		&\le e^{\beta/2}\varepsilon^\beta\E D_{n,\beta}^{-\beta/2}
		=\left(\frac{2\pi e}{\beta}\right)^{\beta/2}
		\kappa_{n,\beta}\varepsilon^\beta.
		\label{eq:laplace-chernoff-small-ball}
	\end{align}
	Taking $Y=X_n^{\mathbb K}$ gives
	\begin{equation}\label{eq:ginibre-chernoff-small-ball}
		\Pp(|W_n^{\mathbb K}|\le\varepsilon)
		\le\min\left\{1,
		\left(\frac{2\pi e}{\beta}\right)^{\beta/2}
		\kappa_{n,\beta}\varepsilon^\beta\right\}.
	\end{equation}
	In the complex case this is $\min\{1,en\varepsilon^2\}$, already enough for
	PACC.  The density argument below is stronger: it is uniform over translated
	balls and, in the complex case, removes the factor $e$.
	
	\begin{proof}[Proof of Theorem~\ref{thm:pacc-headline}]
		Let $C_n^{\mathbb K}$ be the cofactor column determined by all but the first
		row, and let $g_1^{\mathbb K}$ be an independent standard scalar
		$\mathbb K$-Gaussian.  Expansion along the first row and Gaussian rotational
		invariance give the conditional identity
		\begin{equation}\label{eq:permanent-conditional-mixture}
			W_n^{\mathbb K}\mid C_n^{\mathbb K}
			\ \stackrel{\mathrm d}=\
			\frac{\|C_n^{\mathbb K}\|_2}{\sqrt{n!}}g_1^{\mathbb K}.
		\end{equation}
		Thus $W_n^{\mathbb K}$ is a mixture of centered radial Gaussian densities, so
		its density $p_n^{\mathbb K}$ is radial and nonincreasing in $|z|$.
		Conditioned on $C_n^{\mathbb K}$, the Laplace transform of $X_n^{\mathbb K}$
		is $\bigl(1+2sD_{n,\beta}/\beta\bigr)^{-\beta/2}$ with
		$D_{n,\beta}=\|C_n^{\mathbb K}\|_2^2/n!$, so
		$(s/\pi)^{\beta/2}\E[e^{-sX_n^{\mathbb K}}\mid C_n^{\mathbb K}]$ increases
		pointwise to $(\beta/(2\pi D_{n,\beta}))^{\beta/2}$, the conditional Gaussian
		density at $0$.  Monotone convergence gives the Gaussian approximate-identity
		limit
		\begin{equation}\label{eq:ginibre-density-laplace-limit}
			\left(\frac{s}{\pi}\right)^{\beta/2}
			\E e^{-sX_n^{\mathbb K}}
			\uparrow p_n^{\mathbb K}(0)
			\qquad(s\to\infty).
		\end{equation}
		The comparison theorems above give
		$\Delta_n^{\mathbb K}\le_{\mathrm{Lt}}X_n^{\mathbb K}$.  Hence, for every
		$s\ge0$,
		\[
		\left(\frac{s}{\pi}\right)^{\beta/2}\E e^{-sX_n^{\mathbb K}}
		\le
		\left(\frac{s}{\pi}\right)^{\beta/2}\E e^{-s\Delta_n^{\mathbb K}}.
		\]
		Letting $s\to\infty$ and using \eqref{eq:kappa-definition} gives
		\begin{equation}\label{eq:gaussian-density-exact}
			p_n^{\mathbb K}(z)\le p_n^{\mathbb K}(0)
			\le\kappa_{n,\beta}.
		\end{equation}
		The ball of radius $\varepsilon$ in $\mathbb K\simeq\R^\beta$ has volume
		$\omega_\beta\varepsilon^\beta$, so
		\begin{equation}\label{eq:gaussian-small-ball-exact}
			\sup_{z\in\mathbb K}
			\Pp(|W_n^{\mathbb K}-z|\le\varepsilon)
			\le\min\{1,\omega_\beta\kappa_{n,\beta}\varepsilon^\beta\}.
		\end{equation}
		Together with \eqref{eq:kappa-asymptotic}, these estimates prove
		\eqref{eq:gaussian-density-headline} and
		\eqref{eq:gaussian-small-ball-headline}.
	\end{proof}
	
	For $\mathbb K=\C$, $\omega_2=\pi$ and $\kappa_{n,2}=n/\pi$, so the exact
	bound \eqref{eq:gaussian-small-ball-exact} becomes
	\begin{equation}\label{eq:pacc-quadratic}
		\sup_{z\in\C}
		\Pp\bigl(|\per G_n-z|\le\varepsilon\sqrt{n!}\bigr)
		\le\min\{1,n\varepsilon^2\}.
	\end{equation}
	
	\section{Gaussian perturbations}
	\label{sec:gaussian-perturbations}
	
	For additive Gaussian noise, the Laplace comparison holds at every center
	even in the presence of an arbitrary independent perturbation.  It therefore
	gives a uniform density bound.
	
	\begin{theorem}[Gaussian perturbation comparison]
		\label{thm:gaussian-perturbation}
		Let $Z\in\mathbb K^{n\times n}$ be an arbitrary random matrix independent
		of a standard $\mathbb K$-Ginibre matrix $G_n^{\mathbb K}$.  Then, for
		every $w\in\mathbb K$ and $s\ge0$,
		\begin{equation}\label{eq:gaussian-perturbation-laplace}
			\E e^{-s|\per_{\mathbb K}(Z+G_n^{\mathbb K})-w|^2}
			\le
			\E e^{-s|\per_{\mathbb K}G_n^{\mathbb K}|^2}.
		\end{equation}
		Moreover,
		$\per_{\mathbb K}(Z+G_n^{\mathbb K})/\sqrt{n!}$
		has a density $q$ satisfying
		\begin{equation}\label{eq:gaussian-perturbation-density}
			\|q\|_\infty\le p_n^{\mathbb K}(0)\le\kappa_{n,\beta}.
		\end{equation}
		Consequently,
		\begin{equation}\label{eq:gaussian-perturbation-small-ball}
			\sup_{w\in\mathbb K}
			\Pp\left(
			|\per_{\mathbb K}(Z+G_n^{\mathbb K})-w|
			\le\varepsilon\sqrt{n!}
			\right)
			\le
			\min\{1,\omega_\beta\kappa_{n,\beta}\varepsilon^\beta\}.
		\end{equation}
	\end{theorem}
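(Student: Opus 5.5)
The plan is to first fix $Z$ by conditioning and prove \eqref{eq:gaussian-perturbation-laplace} for deterministic $Z$; averaging over $Z$ then gives the general case. Write the rows of $Z+G_n^{\mathbb K}$ as $\mu_i+h_i$, with $\mu_i$ deterministic and $h_i$ i.i.d.\ standard $\mathbb K$-Gaussian rows. The mechanism is the one in the lower-bound part of Proposition~\ref{prop:compression}. Conditional on all rows but one, the row-ordered permanent is affine in the remaining row, say $\sum_j D_j(h_{i,j}+\mu_{i,j})$ or $\sum_j (h_{i,j}+\mu_{i,j})D_j$ with conditional coefficients $D_j$. Hence its Gaussian characteristic function has modulus $\exp(-\sum_j|D_j|^2/(2\beta))$, and this modulus does not depend on the shift $\mu_i$, nor on any additive constant such as $-w$.

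\emph{Step 1 (first row and center).} Condition on rows $2,\dots,n$ and let $C'$ be their permanental cofactor column. Then $\per_{\mathbb K}(Z+G)-w=h_1C'+m$, where $m=\mu_1C'-w$ is fixed. Given $C'$, this is a radial Gaussian with covariance $\|C'\|_2^2 I/\beta$ and mean $m$. The function $e^{-s|x|^2}$ is symmetric and log-concave, so Anderson's inequality (or a direct Gaussian computation) shows that the expectation is maximized at $m=0$. This gives
\[
\E e^{-s|\per_{\mathbb K}(Z+G)-w|^2}\le \E\bigl(1+2s\|C'\|_2^2/\beta\bigr)^{-\beta/2}=\E e^{-s\gamma_{1,\beta}\|C'\|_2^2},
\]
with $\gamma_{1,\beta}$ independent. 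The same display holds with equality for the unshifted $C_n^{\mathbb K}$. It therefore suffices to show
\[
\E e^{-\lambda\|C'\|_2^2}\le\E e^{-\lambda\|C_n^{\mathbb K}\|_2^2}\qquad(\lambda\ge0).
\]

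\emph{Step 2 (remaining rows, one at a time).} Write $e^{-\lambda\|C'\|_2^2}=\E_g e^{\ii(\sqrt{2\beta\lambda}\,g,C')_{\R}}$, where $g$ is an independent standard Gaussian column. Then $(\sqrt{2\beta\lambda}\,g,C')_{\R}$ is the real part of the permanent of the matrix with first row $\sqrt{2\beta\lambda}\,\overline g$ placed above the perturbed rows. Now process rows $i=2,\dots,n$ in turn, conditioning on all other rows. Taking absolute values replaces the conditional expectation over row $i$ by $\exp(-\sum_j|D_j|^2/(2\beta))$. This quantity is unchanged if $\mu_i$ is replaced by $0$, and it equals the conditional characteristic function with row $i$ centered. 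This gives the chain
\[
\bigl|\E(\cdots)\bigr|\le\E\bigl|\E_{h_i}(\cdots)\bigr|=\E\bigl[\E_{h_i}(\cdots)\bigr]_{\mu_i=0},
\]
and the right side is again an expectation of the same form with one fewer shifted row. The quantity on the right is real and nonnegative, so the next step can take its absolute value again. After all rows are centered, the result is exactly $\E e^{-\lambda\|C_n^{\mathbb K}\|_2^2}$. For $\mathbb K=\Hh$ the footnote to Proposition~\ref{prop:compression} covers the fact that left or right multiplication by a quaternion is a scaled orthogonal map, so the modulus computation is unchanged.

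\emph{Step 3 (density and small balls).} Conditional on rows $2,\dots,n$, the normalized permanent is a Gaussian with covariance $\|C'\|_2^2I/(\beta\, n!)$, and $C'\neq0$ almost surely. So the perturbed permanent has a density $q$, namely a mixture of shifted Gaussian densities. Exactly as in \eqref{eq:ginibre-density-laplace-limit}, monotone convergence gives
\[
q(w)=\lim_{s\to\infty}\left(\frac{s}{\pi}\right)^{\beta/2}\E e^{-s|W-w|^2}\qquad\text{with }W=\per_{\mathbb K}(Z+G)/\sqrt{n!}.
\]
Applying \eqref{eq:gaussian-perturbation-laplace} with $w\sqrt{n!}$ as the center bounds this by $p_n^{\mathbb K}(0)$, and $p_n^{\mathbb K}(0)\le\kappa_{n,\beta}$ by \eqref{eq:gaussian-density-exact}. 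Integrating $q$ over a ball of volume $\omega_\beta\varepsilon^\beta$ then yields \eqref{eq:gaussian-perturbation-small-ball}.

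I expect the main obstacle to be the bookkeeping in Step 2. The difficulty is ensuring that each absolute-value step is legitimate, i.e.\ that after centering a row the expression is real and nonnegative so the chain can continue. It is also important to check that the modulus formula really is independent of both $\mu_i$ and the constant term, for the row-ordered product in the quaternion case.
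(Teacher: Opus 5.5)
Your argument is correct in substance and takes a genuinely different route from the paper's.

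The paper works in the Laplace domain throughout. Over $\R$ and $\C$ it applies the noncentral kernel bound $\E e^{-s|gc+b|^2}\le\E e^{-s|gc|^2}$ to every row in turn, with $b=Z_1C^{(1)}-w$ and then $b=Z_iC^{(i)}$. Over $\Hh$, an interior row enters the row-ordered product two-sidedly. The paper handles this with the suffix contraction of Lemma~\ref{lem:quaternion-suffix-contraction} and Lemma~\ref{lem:quaternion-gaussian-left-right}, so that the row being replaced is always the leading factor.

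You use the noncentral kernel only once, to remove $w$ and $\mu_1$. You then rewrite $e^{-\lambda\|C'\|^2}$ as a characteristic function with an auxiliary Gaussian top row, and center rows $2,\dots,n$ by the triangle inequality, as in the nonnegativity half of Proposition~\ref{prop:compression}. This treats all three fields uniformly: the modulus of a Gaussian characteristic function ignores the mean, whatever real-linear map the row enters through.

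The paper's route has its own payoff. Its Laplace-domain row replacement is the same machinery that proves Theorem~\ref{thm:independent-row-comparison} for non-Gaussian rows satisfying only a Laplace projection condition, where no Fourier positivity is available.

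Three details need repair.
\begin{itemize}
\item \emph{Interior rows over $\Hh$.} Conditioning on all rows but an interior row $i\in\{2,\dots,n-1\}$ gives $\sum_j\sum_{\sigma(i)=j}P_\sigma h_{i,j}S_\sigma$, with nontrivial prefixes $P_\sigma$ and suffixes $S_\sigma$. It is not of the form $\sum_jD_jh_{i,j}$ or $\sum_jh_{i,j}D_j$, so your formula $\exp(-\sum_j|D_j|^2/(2\beta))$ and the appeal to the footnote are wrong there. Replace the formula by $\exp(-\tfrac12\operatorname{Var})$ of the conditional real Gaussian. This is still independent of $\mu_i$ and equals the centered conditional value, so your chain goes through.
\item \emph{Step~3 convergence.} The convergence is not monotone. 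For a component $X\sim N(m,\sigma^2I)$, the quantity $(s/\pi)^{\beta/2}\E e^{-s|X-w|^2}$ is the $N(m,(\sigma^2+1/(2s))I)$ density at $w$. This decreases to its limit when $|w-m|$ is large compared with $\sigma$. Fatou's lemma still gives $q(w)\le\liminf_{s\to\infty}(s/\pi)^{\beta/2}\E e^{-s|W-w|^2}\le p_n^{\mathbb K}(0)$, which is all you need. (The paper instead tests $K_t*\mu\le p_n^{\mathbb K}(0)$ against $f\in C_c$.)
\item \emph{$C'\neq0$ almost surely.} This deserves a line. For example, $\per_{\mathbb K}$ of a shifted Gaussian $(n-1)\times(n-1)$ matrix is a nonzero polynomial in the Gaussian coordinates, since its top-degree part is the unshifted permanent.
\end{itemize}
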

	
	The same statement with positive row-dependent Gaussian scales follows by
	rescaling the rows.  For $\mathbb K=\R$ and deterministic $Z=tW$,
	Theorem~\ref{thm:gaussian-perturbation} proves the conjecture on gently
	perturbed Gaussian permanents, Conjecture~6 of Bouland, Datta, Fefferman, and
	Hern\'andez \cite{BoulandDattaFeffermanHernandez2025}.  The theorem does not
	require their restrictions that $t=O(n^{-1/2})$ and that $W$ have bounded
	entries.
	
	Equality holds in \eqref{eq:gaussian-perturbation-laplace} when $Z=0$ and
	$w=0$, and the comparison $\|q\|_\infty\le p_n^{\mathbb K}(0)$ is attained
	when $Z=0$.  This does not assert optimality of the explicit bound
	$p_n^{\mathbb K}(0)\le\kappa_{n,\beta}$.
	
	\begin{proof}
		If $g$ is a standard $\mathbb K$-Gaussian row, then for every
		$c\in\mathbb K^n$, $b\in\mathbb K$, and $s\ge0$,
		\begin{equation}\label{eq:noncentral-gaussian-kernel}
			\E e^{-s|gc+b|^2}
			=
			\left(1+\frac{2s}{\beta}\|c\|_2^2\right)^{-\beta/2}
			\exp\left(
			-\frac{s|b|^2}{1+2s\|c\|_2^2/\beta}
			\right)
			\le \E e^{-s|gc|^2}.
		\end{equation}
		
		First suppose $\mathbb K\in\{\R,\C\}$.  Conditioning on $Z$ reduces to
		deterministic $Z$.
		Condition on all but the first row and let $C^{(1)}$ be its permanental
		cofactor column.  Formula \eqref{eq:noncentral-gaussian-kernel}, with
		$b=Z_1C^{(1)}-w$, replaces the first row by a centered Gaussian row and removes
		the center $w$.  At each subsequent replacement, let $C^{(i)}$ denote the
		cofactor column determined by all other current rows.  The same formula with
		$b=Z_iC^{(i)}$ replaces row $i$ by a centered Gaussian row.  Iterating proves
		\eqref{eq:gaussian-perturbation-laplace}.  The quaternionic argument is
		given in Subsection~\ref{subsec:quaternion-row-replacement}.
		
		Put
		\[
		V=\frac{\per_{\mathbb K}(Z+G_n^{\mathbb K})}{\sqrt{n!}}.
		\]
		Setting $w=\sqrt{n!}\,u$ and replacing $s$ by $t/n!$ in
		\eqref{eq:gaussian-perturbation-laplace} gives, for every
		$u\in\mathbb K$ and $t\ge0$,
		\[
		\E e^{-t|V-u|^2}\le \E e^{-tX_n^{\mathbb K}}.
		\]
		Let $\mu$ be the law of $V$ and
		$K_t(x)=(t/\pi)^{\beta/2}e^{-t|x|^2}$.  Multiplying the preceding inequality
		by $(t/\pi)^{\beta/2}$ shows that
		\[
		(K_t*\mu)(u)\le p_n^{\mathbb K}(0)
		\qquad(u\in\mathbb K,t>0),
		\]
		where we used \eqref{eq:ginibre-density-laplace-limit} on the right.  For every
		nonnegative $f\in C_c(\mathbb K)$, Fubini's theorem and the approximate-identity
		property give
		\[
		\int f\,\dd\mu
		=\lim_{t\to\infty}\int f(u)(K_t*\mu)(u)\,\dd u
		\le p_n^{\mathbb K}(0)\int f(u)\,\dd u.
		\]
		Thus $\mu$ has a density $q$ with
		$\|q\|_\infty\le p_n^{\mathbb K}(0)$.  Equation
		\eqref{eq:gaussian-density-exact} proves
		\eqref{eq:gaussian-perturbation-density}, and integration over a ball of
		radius $\varepsilon$ proves
		\eqref{eq:gaussian-perturbation-small-ball}.
	\end{proof}
	
	\appendix
	
	\section{Failure of stronger comparisons}
	\label{app:stronger-orders}
	
	Following the standard terminology
	\cite[Sections~1.A and~3.A]{ShakedShanthikumar2007},
	$U\le_{\mathrm{st}}V$, the usual stochastic order, means
	$\Pp(U>t)\le\Pp(V>t)$ for every $t$.  For integrable random variables,
	$U\le_{\mathrm{cx}}V$, the convex order, means
	$\E f(U)\le\E f(V)$ for every convex function $f\colon\R\to\R$ for which
	the expectations exist.  The usual stochastic order compares magnitude,
	whereas convex order compares variability and in particular forces equal means.
	
	Since $x\mapsto e^{-sx}$ is decreasing and convex, either of the comparisons
	\[
	\Delta_n^{\mathbb K}\le_{\mathrm{st}}X_n^{\mathbb K}
	\qquad\text{or}\qquad
	X_n^{\mathbb K}\le_{\mathrm{cx}}\Delta_n^{\mathbb K}
	\]
	would imply $\Delta_n^{\mathbb K}\le_{\mathrm{Lt}}X_n^{\mathbb K}$.  We show
	that neither holds, already at $n=3$, for each
	$\mathbb K\in\{\R,\C,\Hh\}$.  The normalized variables from the introduction
	satisfy
	\[
	\E\Delta_n^{\mathbb K}=\E X_n^{\mathbb K}=1.
	\]
	If either usual stochastic comparison held, integration of the corresponding
	tail inequality would therefore force equality in distribution.  The laws do
	agree for $n\le2$.  Indeed, at $n=2$, write
	$G_2^{\mathbb K}=\left(\begin{smallmatrix}a&b\\c&d\end{smallmatrix}\right)$.
	Conditioning on the second row gives
	\[
	|\per_{\mathbb K}G_2^{\mathbb K}|^2
	\stackrel{\mathrm d}=\gamma_{1,\beta}(|c|^2+|d|^2)
	\stackrel{\mathrm d}=\gamma_{1,\beta}\gamma_{2,\beta},
	\]
	which is the determinant or Study-determinant benchmark.
	
	Over $\R$ and $\C$, their first two moments agree in every dimension.  A Wick expansion
	gives
	\[
	\E\Delta_n^2=\E X_n^2
	=
	\begin{cases}
		\binom{n+2}{2},&\F=\R,\\
		n+1,&\F=\C.
	\end{cases}
	\]
	Equivalently, the fourth-moment expansions of the determinant and permanent
	coincide: every surviving Wick term has positive total permutation sign.
	At $n=3$, the quaternionic second moments also agree, with
	$\E(\Delta_3^{\Hh})^2=\E(X_3^{\Hh})^2=35/16$.  This agreement fails at
	$n=4$:
	\[
	\E(\Delta_4^{\Hh})^2=\frac{315}{128},
	\qquad
	\E(X_4^{\Hh})^2=\frac{39}{16}=\frac{312}{128}.
	\]
	The third moments separate at $n=3$ in all three fields.  To make the
	calculation explicit, let $u,v\in\mathbb K^3$ be the last two rows and set
	\begin{align*}
		S_{\mathrm{per}}^{\mathbb K}
		&=\sum_{1\le i<j\le3}|u_iv_j+u_jv_i|^2,\\
		S_{\Delta}^{\mathbb K}
		&=
		\begin{cases}
			\displaystyle\sum_{1\le i<j\le3}|u_iv_j-u_jv_i|^2,
			&\mathbb K\in\{\R,\C\},\\[2mm]
			\displaystyle\|u\|_2^2\|v\|_2^2
			-\left|\sum_{j=1}^3u_j\overline{v_j}\right|^2,
			&\mathbb K=\Hh.
		\end{cases}
	\end{align*}
	Integrating out the first row gives, with the gamma variable independent of the
	two scales,
	\[
	X_3^{\mathbb K}\stackrel{\mathrm d}=
	\frac{\gamma_{1,\beta}S_{\mathrm{per}}^{\mathbb K}}{3!},
	\qquad
	\Delta_3^{\mathbb K}\stackrel{\mathrm d}=
	\frac{\gamma_{1,\beta}S_{\Delta}^{\mathbb K}}{3!}.
	\]
	For fixed $u$, write the $3\beta$ real coordinates of $v$ as
	$g/\sqrt\beta$, where $g$ is standard real Gaussian.  There are real symmetric
	positive semidefinite matrices $Q_{\mathrm{per}}(u)$ and $Q_\Delta(u)$ such that
	\[
	S_{\mathrm{per}}^{\mathbb K}
	=\frac1\beta g^TQ_{\mathrm{per}}(u)g,
	\qquad
	S_{\Delta}^{\mathbb K}
	=\frac1\beta g^TQ_\Delta(u)g.
	\]
	For any real symmetric $Q$,
	\begin{equation}\label{eq:gaussian-quadratic-third-moment}
		\E_g(g^TQg)^3
		=(\operatorname{tr}Q)^3
		+6\operatorname{tr}(Q)\operatorname{tr}(Q^2)
		+8\operatorname{tr}(Q^3).
	\end{equation}
	For either quadratic form, put
	\[
	A(Q)=\E_u(\operatorname{tr}Q)^3,
	\qquad
	B(Q)=\E_u[\operatorname{tr}(Q)\operatorname{tr}(Q^2)],
	\qquad
	C(Q)=\E_u\operatorname{tr}(Q^3).
	\]
	A direct Wick average in the real coordinates of $u$ gives
	\[
	\begin{array}{c|ccc|ccc}
		&\multicolumn{3}{c|}{Q_\Delta}
		&\multicolumn{3}{c}{Q_{\mathrm{per}}}\\
		\mathbb K&A&B&C&A&B&C\\ \hline
		\R  &840   &420  &210&840   &420  &222\\
		\C  &3840  &960  &240&3840  &960  &264\\
		\Hh &21504 &2688 &336&21504 &2688 &366.
	\end{array}
	\]
	Thus
	$\E(S^{\mathbb K})^3=\beta^{-3}(A+6B+8C)$ for the corresponding scale.
	Since
	$\E\gamma_{1,1}^3=15$, $\E\gamma_{1,2}^3=6$, and
	$\E\gamma_{1,4}^3=3$, division by $(3!)^3$ gives
	\[
	\begin{array}{c@{\qquad}c@{\qquad}c}
		\mathbb K
		& \E(\Delta_3^{\mathbb K})^3
		& \E(X_3^{\mathbb K})^3\\ \hline
		\R & 350 & 1070/3\\
		\C & 40  & 122/3\\
		\Hh & 35/4 & 845/96.
	\end{array}
	\]
	Together with the equal means, these discrepancies rule out usual stochastic
	domination in either direction for every $\mathbb K$.
	The third moments also rule out the natural convex-order strengthening of
	the Laplace-transform order result proved in this paper.  Namely,
	$X_3^{\mathbb K}\le_{\mathrm{cx}}\Delta_3^{\mathbb K}$, applied to the convex
	function $f(t)=|t|^3$, would imply
	$\E(X_3^{\mathbb K})^3\le\E(\Delta_3^{\mathbb K})^3$, contrary to the table.
	
	Figure~\ref{fig:order-separation} displays the same qualitative separation over
	$\R$, $\C$, and $\Hh$: the distribution functions cross, while the
	Laplace-transform differences retain the same sign.  Indeed, for nonnegative $U,V$,
	\[
	\E e^{-sU}-\E e^{-sV}
	=s\int_0^\infty e^{-st}\bigl(F_U(t)-F_V(t)\bigr)\,dt,
	\]
	so the exponential kernel averages over the crossing rather than comparing the
	CDFs pointwise.
	
	\begin{figure}[htbp]
		\centering
		\begin{subfigure}[t]{0.48\textwidth}
			\centering
			\includegraphics[width=\linewidth]{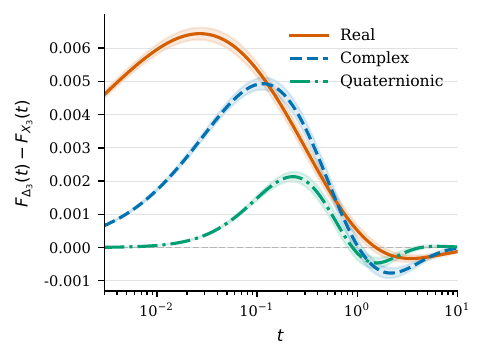}
			\caption{Difference of CDFs, determinantal benchmark minus squared permanent:
				the sign changes.}
		\end{subfigure}
		\hfill
		\begin{subfigure}[t]{0.48\textwidth}
			\centering
			\includegraphics[width=\linewidth]{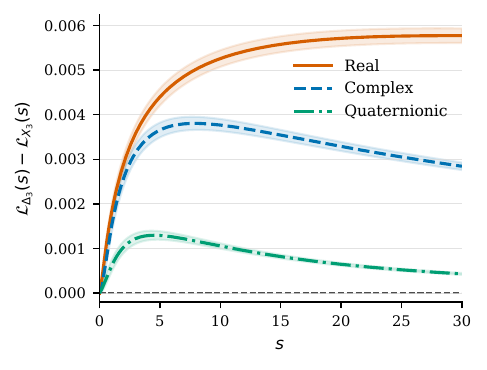}
			\caption{Difference of Laplace transforms, determinantal benchmark minus
				squared permanent: the sign is nonnegative.}
		\end{subfigure}
		\caption{Monte Carlo comparison of the normalized determinantal benchmark
			$\Delta_3^{\mathbb K}$ and normalized squared permanent $X_3^{\mathbb K}$
			for $3\times3$ Ginibre matrices over $\mathbb K\in\{\R,\C,\Hh\}$.  Over
			$\Hh$, the benchmark uses the Study determinant and the permanent is
			row-ordered.  The estimates use $3\mathord\times10^6$ samples in each
			field; shaded bands show two standard errors.  One Gaussian row is
			integrated out analytically.}
		\label{fig:order-separation}
	\end{figure}
	\FloatBarrier
	
	\section{The Cayley determinant}
	\label{app:cayley-determinant}
	
	For $A\in\Hh^{n\times n}$, the signed row-ordered expression
	\[
	\Cdet A
	:=\sum_{\sigma\in S_n}\operatorname{sgn}(\sigma)
	a_{1,\sigma(1)}a_{2,\sigma(2)}\cdots a_{n,\sigma(n)}
	\]
	is known as the \emph{Cayley determinant} or \emph{row-determinant}
	\cite{ChienRasmussenSinclair2003,ArvindSrinivasan2010}.  It is distinct from
	the Study determinant.  For example, with the usual quaternion units,
	\[
	U=\frac1{\sqrt2}
	\begin{pmatrix}\mathbf i&\mathbf j\\ \mathbf j&\mathbf i\end{pmatrix}
	\quad\text{is unitary, but}\quad
	\Cdet U=0,
	\]
	so $\Sdet U=1$; this example also appears in
	\cite{CohenDeLeo2000}.  Conversely,
	\[
	A=\begin{pmatrix}\mathbf i&\mathbf j\\ \mathbf i&\mathbf j\end{pmatrix}
	\quad\text{is singular, but}\quad
	\Cdet A
	=\mathbf i\mathbf j-\mathbf j\mathbf i=2\mathbf k.
	\]
	Thus the two determinants neither agree in magnitude nor control one another
	pointwise.  Instead, $\Sdet(A)^2$ is the Moore determinant of $A^*A$, as
	recalled in Section~\ref{sec:determinantal-benchmark}.
	\par\medskip
	
	\begin{theorem}[Study--Cayley determinant comparison]
		\label{thm:study-cayley-comparison}
		For every $n\ge1$,
		\[
		\Sdet(G_n^{\Hh})^2
		\le_{\mathrm{Lt}}
		\left|\Cdet G_n^{\Hh}\right|^2.
		\]
	\end{theorem}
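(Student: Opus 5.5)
The plan is to rerun the Gaussian cofactor induction of Theorem~\ref{thm:gaussian-cofactor-gamma} with the Cayley determinant in place of the row-ordered permanent, and then to conclude exactly as in Theorem~\ref{thm:quaternion-laplace}. For an $(n-1)\times n$ standard quaternionic Gaussian matrix $B$, define the signed Cayley cofactor column by $(C_n^{\mathrm{C}})_j=(-1)^{j-1}\Cdet(B_{-j})$, with $C_1^{\mathrm{C}}=(1)$. Row-ordered expansion along the first row gives
\[
\Cdet\begin{pmatrix}x\\ B\end{pmatrix}=\sum_j x_j(C_n^{\mathrm{C}})_j ,
\]
with $x_j$ multiplying on the left, exactly as for $\per_{\Hh}$. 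The right factor is a row-ordered product of the remaining rows, so the expansion is valid despite noncommutativity.

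The goal is to prove $\Pi_{n,4}\le_{\mathrm{Lt}}\|C_n^{\mathrm{C}}\|_2^2$ by induction. Once this holds, the first-row expansion gives $|\Cdet G_n^{\Hh}|^2\stackrel{\mathrm d}=\gamma_{1,4}\|C_n^{\mathrm{C}}\|_2^2$ with independent factors. Multiplying by the independent $\gamma_{1,4}$ and invoking \eqref{eq:study-gamma-product} then finishes the proof.

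\textbf{Inductive step.} The step needs the analogue of Proposition~\ref{prop:compression} for $\varphi(z)=\E e^{\ii(z,C_{n+1}^{\mathrm{C}})_{\R}}$, and I would check its ingredients one at a time.
\begin{itemize}
\item \emph{Nonnegativity.} Conditioning on all rows but the last one $h$, the quantity $\sum_j w_j C_j$ becomes $\sum_j D_jh_j$. This holds because $h$ is the last factor in every row-ordered product, so it still multiplies on the right. The same Gaussian average applies.
\item \emph{Radial reduction for a single coordinate.} Left multiplication of the first row of $B$ by a unit quaternion still left-multiplies every cofactor. Column exchangeability becomes exchangeability up to signs, since permuting columns changes $\Cdet$ by $\operatorname{sgn}$. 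A sign flip is a unit scalar, and $-C_j$ has the same law as $C_j$, so each single coordinate is handled.
\item \emph{Two-column expansion.} The analogue of \eqref{eq:two-column-expansion} carries signs. Real-bilinear structure in the columns $X,Y$ plus linear terms is all that Corollary~\ref{cor:gaussian-two-coordinate-compression} uses: the operators $T_R,L_R$ are merely real-linear maps on $\R^{4(n-1)}$. The corollary applies verbatim, with the endpoint vectors adjusted by signs.
\item \emph{Identity for the first coordinate.} The identity $\E e^{\ii(re_1,C_{n+1}^{\mathrm{C}})_{\R}}=\E e^{-r^2\|C_n^{\mathrm{C}}\|_2^2/8}$ again follows by expanding along the first row of the $n\times n$ block. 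The first row multiplies on the left, so $\RePart(x_jc_j)$ is Gaussian with variance $|c_j|^2/4$.
\end{itemize}
Lemma~\ref{lem:fourier-outer-products} and the trace test then close the induction exactly as before. The base case $n=1$ is the equality $\Pi_{1,4}=1$.

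\textbf{Main obstacle.} The delicate point is the two-column step: confirming that, after fixing the other columns, $\Cdet$ of the bordered matrix is genuinely real-bilinear in the two chosen columns with no cross terms of a different form. In a row-ordered product, the entries from columns $1$ and $2$ sit in different rows and multiply in fixed positions with fixed quaternions on either side. Each term therefore has the form $q\,X_i\,q'\,Y_k\,q''$ or $q\,Y_k\,q'\,X_i\,q''$, which is real-bilinear, so $T_R$ and $L_R$ exist as real matrices. I would verify this first, together with the sign bookkeeping in the reduction of nonzero coordinates. Nothing in Lemmas~\ref{lem:one-sided-quadratic-scaling} and~\ref{lem:gaussian-geometric-interpolation} uses symmetry or quaternionic linearity of $T$, so I expect no further issues.
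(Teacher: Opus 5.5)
Your proposal is correct and follows the paper's proof essentially step for step. The paper also reruns Fourier coordinate compression and the cofactor induction for the signed Cayley cofactor column $D_n$, uses $D(BP)=\operatorname{sgn}(P)P^{-1}D(B)$ together with $D_n\stackrel{\mathrm d}=-D_n$ for exchangeability, and casts the two-column expansion in $F_{T,L}$ form with $(a,b)=(w_1,-w_2)$. It then multiplies $\Pi_{n,4}\le_{\mathrm{Lt}}\|D_n\|_2^2$ by an independent $\gamma_{1,4}$ and invokes the Study determinant benchmark, exactly as you do.
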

	
	\begin{proof}
		The claim is immediate at $n=1$.  For $n\ge2$, let $B$ be an
		$(n-1)\times n$ standard quaternionic Gaussian matrix.  Write $D(B)$ for
		its signed Cayley cofactor column,
		\[
		D(B)_j=(-1)^{j-1}\Cdet(B_{-j}),
		\qquad 1\le j\le n,
		\]
		and set $D_n=D(B)$ and $D_1=(1)$.
		For every row $w\in\Hh^n$, expansion along the first row gives
		\[
		\Cdet\begin{pmatrix}w\\B\end{pmatrix}
		=\sum_{j=1}^nw_jD(B)_j.
		\]
		We briefly indicate why the proof of Fourier coordinate compression,
		Proposition~\ref{prop:compression}, applies to $D_n$.  For a permutation
		matrix $P$,
		\[
		D(BP)=\operatorname{sgn}(P)P^{-1}D(B).
		\]
		Negating one Gaussian row shows that $D_n\stackrel{\mathrm d}=-D_n$, so
		column-permutation invariance of $B$ makes the coordinates of $D_n$
		exchangeable.  Left multiplication of the first row of $B$ by a unit
		quaternion gives the same unit-scalar invariance used in the main proof.
		
		For the two-coordinate step, write $B=[X,Y,R]$ and $w=\overline z$.  Expansion
		along the fixed first row gives
		\begin{align*}
			\sum_{j=1}^nw_j(D_n)_j
			={}&w_1\Cdet[Y,R]
			-w_2\Cdet[X,R]\\
			&+\sum_{j=3}^n(-1)^{j-1}w_j
			\Cdet[X,Y,R_{-j}].
		\end{align*}
		After identifying quaternionic vectors with their underlying real vectors,
		this has exactly the $F_{T,L}(a,b)$ form from the main proof with
		$(a,b)=(w_1,-w_2)$, to which Gaussian two-coordinate compression applies
		unchanged.  Conditioning on the last Gaussian row also gives the same
		nonnegative conditional characteristic function.  Hence
		\[
		0\le \E e^{\ii(z,D_n)_{\R}}
		\le \E e^{\ii(\|z\|_2e_1,D_n)_{\R}}.
		\]
		
		The proof of Theorem~\ref{thm:gaussian-cofactor-gamma} now applies without
		further changes.  The first coordinate of $D_{n+1}$ has the law of
		$\Cdet G_n^{\Hh}$, and its expansion along an independent first
		row has the same one-coordinate Gaussian recursion.  The resulting scalar
		comparison is
		\[
		\Pi_{n,4}\le_{\mathrm{Lt}}\|D_n\|_2^2.
		\]
		A final first-row expansion gives, with an independent factor on the right,
		\[
		\left|\Cdet G_n^{\Hh}\right|^2
		\stackrel{\mathrm d}=\gamma_{1,4}\|D_n\|_2^2.
		\]
		Multiplying the preceding comparison by $\gamma_{1,4}$ and using
		Proposition~\ref{prop:study-determinant-benchmark} proves the theorem.
	\end{proof}
	
	At $n=2$, the comparison is equality in distribution.  Indeed, if
	$G_2^{\Hh}=\left(\begin{smallmatrix}a&b\\c&d\end{smallmatrix}\right)$, then
	conditioning on $(c,d)$ and using quaternionic Gaussian rotational invariance
	gives
	\[
	|ad-bc|^2
	\stackrel{\mathrm d}=\gamma_{1,4}(|c|^2+|d|^2)
	\stackrel{\mathrm d}=\gamma_{1,4}\gamma_{2,4}
	\stackrel{\mathrm d}=\Sdet(G_2^{\Hh})^2.
	\]
	Equality in distribution is special to low dimension.  One moment identity
	does persist: the Cayley determinant and row-ordered permanent have the same
	fourth moment in every dimension,
	\begin{equation}
		\E\left|\Cdet G_n^{\Hh}\right|^4
		=\E\left|\per_{\Hh}G_n^{\Hh}\right|^4.
		\label{eq:cayley-permanent-fourth-moment}
	\end{equation}
	To see this, expand either side using four permutations.  A term can have
	nonzero expectation only if every independent matrix entry occurs an even
	number of times.  For such a term, the symmetric differences of the first two
	and last two permutation matchings coincide.  The relative sign of two
	permutations is determined by the cycles in their symmetric difference, so the
	product of the four permutation signs is $1$.  The surviving terms in the two
	expansions therefore agree term by term.
	
	Their magnitude distributions nevertheless differ already at $n=3$.  Let
	$u,v\in\Hh^3$ be the last two rows of $G_3^{\Hh}$ and set
	\[
	S_{\pm}:=\sum_{1\le i<j\le3}|u_i v_j\pm u_jv_i|^2.
	\]
	Conditioning on $u,v$ and integrating out the first row gives, with the gamma
	variable independent of $S_{\pm}$,
	\[
	|\per_{\Hh}G_3^{\Hh}|^2\stackrel{\mathrm d}=\gamma_{1,4}S_+,
	\qquad
	|\Cdet G_3^{\Hh}|^2\stackrel{\mathrm d}=\gamma_{1,4}S_-.
	\]
	Here $S_+=S_{\mathrm{per}}^{\Hh}$ from Appendix~\ref{app:stronger-orders}.
	For fixed $u$, write the twelve real coordinates of $v$ as $g/2$ and define
	$Q_-(u)$ by $S_-=\frac14g^TQ_-(u)g$.  In the notation used with
	\eqref{eq:gaussian-quadratic-third-moment}, Wick averaging gives
	\[
	A(Q_-)=21504,
	\qquad B(Q_-)=2688,
	\qquad C(Q_-)=354.
	\]
	Thus
	\[
	\E S_-^3
	=\frac{21504+6\cdot2688+8\cdot354}{4^3}
	=\frac{2529}{4},
	\qquad
	\E S_+^3=\frac{2535}{4},
	\]
	where the last value is recorded in Appendix~\ref{app:stronger-orders}.
	Since $\E\gamma_{1,4}^3=3$, these values and the Study-determinant product
	formula yield the strict three-way discrepancy
	\[
	\E\left(\frac{\Sdet(G_3^{\Hh})^2}{3!}\right)^3
	=\frac{840}{96}
	<\E\left(\frac{|\Cdet G_3^{\Hh}|^2}{3!}\right)^3
	=\frac{843}{96}
	<\E\left(\frac{|\per_{\Hh}G_3^{\Hh}|^2}{3!}\right)^3
	=\frac{845}{96}.
	\]
	In particular, the magnitudes of the Cayley determinant and permanent do not
	have the same distribution.
	
	At $n=4$, the Study--Cayley separation is visible already in the second moment.
	Indeed, equation~\eqref{eq:cayley-permanent-fourth-moment} and the moments
	recorded in Appendix~\ref{app:stronger-orders} give
	\[
	\E\left(\frac{|\Cdet G_4^{\Hh}|^2}{4!}\right)^2
	=\frac{39}{16}=\frac{312}{128}
	<\frac{315}{128}
	=\E\left(\frac{\Sdet(G_4^{\Hh})^2}{4!}\right)^2.
	\]
	Thus the equality at $n=2$ and the fourth-moment identity above are both
	special coincidences rather than equality of the three laws.
	
	\section{Gaussian comparison for independent rows}
	\label{app:independent-rows}
	
	This appendix gives a sufficient projection condition under which comparison
	with the Gaussian model extends to matrices with independent, not necessarily
	Gaussian, rows.  The resulting normalization is the exact second-moment scale.
	We do not pursue comparisons at an arbitrary smaller scale: a fixed loss in
	each row becomes an exponential loss for the permanent.  We also record the
	quaternionic row-replacement details used here and in
	Section~\ref{sec:gaussian-perturbations}.
	
	Fix $\mathbb K\in\{\R,\C,\Hh\}$ and put
	$\beta=\dim_{\R}\mathbb K$.
	
	\subsection{Projection criterion}
	
	\begin{theorem}[Independent-row comparison]
		\label{thm:independent-row-comparison}
		Let $A\in\mathbb K^{n\times n}$ have independent rows
		$R_1,\ldots,R_n$.  Suppose that, for some $\sigma_1,\ldots,\sigma_n>0$,
		\begin{equation}\label{eq:row-projection-criterion}
			\E R_i^*R_i=\sigma_i^2I_n,
			\qquad
			\gamma_{1,\beta}
			\le_{\mathrm{Lt}}
			\frac{|R_ic|^2}{\sigma_i^2\|c\|_2^2}
			\quad(c\in\mathbb K^n\setminus\{0\}).
		\end{equation}
		Then
		\begin{equation}\label{eq:independent-row-laplace-comparison}
			\Delta_n^{\mathbb K}
			\le_{\mathrm{Lt}}X_n^{\mathbb K}
			\le_{\mathrm{Lt}}
			\frac{|\per_{\mathbb K}A|^2}
			{n!\prod_{i=1}^n\sigma_i^2}.
		\end{equation}
		Consequently, for every $\varepsilon>0$,
		\begin{equation}\label{eq:independent-row-small-ball}
			\Pp\left(
			|\per_{\mathbb K}A|
			\le\varepsilon\sqrt{n!}\prod_{i=1}^n\sigma_i
			\right)
			\le
			\min\left\{1,
			\left(\frac{2\pi e}{\beta}\right)^{\beta/2}
			\kappa_{n,\beta}\varepsilon^\beta
			\right\}.
		\end{equation}
	\end{theorem}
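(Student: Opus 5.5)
The first inequality in \eqref{eq:independent-row-laplace-comparison} is already known: it is Theorem~\ref{thm:determinantal-cofactor-comparison} for $\F\in\{\R,\C\}$ and Theorem~\ref{thm:quaternion-laplace} for $\Hh$. So the new content is the second inequality. I will prove it by a row-by-row replacement argument, in the spirit of the proof of Theorem~\ref{thm:gaussian-perturbation}. First I normalize: dividing row $R_i$ by $\sigma_i$ divides $\per_{\mathbb K}A$ by $\prod_i\sigma_i$, since scalars on the left of a row factor out of the row-ordered product. Hence we may assume $\sigma_i=1$ for all $i$.

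Define hybrid matrices $A^{(k)}$ whose first $k$ rows are independent standard $\mathbb K$-Gaussian rows $g_1,\dots,g_k$ and whose remaining rows are $R_{k+1},\dots,R_n$, all independent. Then $A^{(0)}=A$ and $A^{(n)}\stackrel{\mathrm d}=G_n^{\mathbb K}$. It suffices to show, for each $k$ and $s\ge0$,
\[
\E e^{-s|\per_{\mathbb K}A^{(k)}|^2}\ge \E e^{-s|\per_{\mathbb K}A^{(k-1)}|^2}.
\]

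To compare $A^{(k-1)}$ with $A^{(k)}$, condition on all rows other than row $k$. For $\mathbb K\in\{\R,\C\}$, multilinearity gives $\per A^{(k-1)}=R_kc$ and $\per A^{(k)}=g_kc$, where $c$ is the permanental cofactor column of row $k$ and is independent of row $k$. Conditionally on $c$, the hypothesis \eqref{eq:row-projection-criterion} gives $\|c\|_2^2\gamma_{1,\beta}\le_{\mathrm{Lt}}|R_kc|^2$. Also $|g_kc|^2\stackrel{\mathrm d}=\|c\|_2^2\gamma_{1,\beta}$, which is the Gaussian row computation already used in \eqref{eq:noncentral-gaussian-kernel} with $b=0$. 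Integrating over $c$ preserves the Laplace inequality, and chaining over $k=1,\dots,n$ yields $X_n^{\mathbb K}\le_{\mathrm{Lt}}|\per A|^2/n!$.

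The main obstacle is the quaternionic case. Because the product is row-ordered, the permanent is not of the form $R_kc$ for a cofactor column when $k>1$. Expanding, one gets $\per_{\Hh}A=\sum_j \alpha_j\, (R_k)_j\, \beta_j$, a sum over entries of row $k$ with entry-dependent left and right quaternionic factors (sums of products over the earlier and later rows). The plan is to reduce to row $1$ by reordering. Write the row-ordered sum with row $k$ moved to the front. This is not legitimate pointwise, but $|\cdot|$ is invariant under conjugation, and I would try to use that the law of the Gaussian rows is invariant under left and right unit-quaternion multiplication. That would show the map $x\mapsto\per_{\Hh}$ with row $k$ equal to $x$ is, conditionally, a real-linear map $x\mapsto \sum_j \alpha_j x_j\beta_j$.

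For this real-linear map, the Gaussian side $|\sum_j\alpha_j g_j\beta_j|^2$ still has law $\gamma_{1,4}\sum_j|\alpha_j|^2|\beta_j|^2$. However, the criterion \eqref{eq:row-projection-criterion} is only stated for left-$\Hh$-linear functionals $x\mapsto xc$. Closing this gap is where I expect the real work: presumably the deferred Subsection~\ref{subsec:quaternion-row-replacement} handles it. I would first try to replace rows in the order $n,n-1,\dots,1$. The reason is that when row $k$ is replaced, all rows after it are already Gaussian. Their rotational invariance should let one absorb the right factors $\beta_j$, making the conditional law of the permanent equal to that of $R_k c'$ for a suitable column $c'$ with $\|c'\|$ matching the Gaussian case.

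Finally, \eqref{eq:independent-row-small-ball} follows from \eqref{eq:laplace-chernoff-small-ball}. Apply it with $Y=|\per_{\mathbb K}A|^2/(n!\prod_i\sigma_i^2)$, which satisfies $\Delta_n^{\mathbb K}\le_{\mathrm{Lt}}Y$ by transitivity of Laplace order, and combine with the trivial bound $1$.
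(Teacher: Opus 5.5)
Your treatment of the first inequality, of the real and complex row replacement, and of the small-ball consequence matches the paper. There is a genuine gap in the quaternionic case: you give no argument there, and the plan you sketch does not work.

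\textbf{Why the sketched quaternionic plan fails.} The criterion \eqref{eq:row-projection-criterion} controls $R_kc=\sum_jR_{k,j}c_j$, that is, row-$k$ entries carrying coefficients on the \emph{right}. So right factors are harmless; the obstruction is the left factors contributed by rows $1,\dots,k-1$.

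In your order $n,n-1,\dots,1$, those rows are still the original non-Gaussian rows when row $k$ is replaced, and the Gaussian suffix cannot absorb them. The plan already fails at the first step. For $k=n$ there is no suffix at all, and $\per_{\Hh}A=\sum_j\Lambda_jR_{n,j}$ with $\Lambda_j$ built from $R_1,\dots,R_{n-1}$. For $n=2$ this is $R_{1,2}R_{2,1}+R_{1,1}R_{2,2}$. This is a left-coefficient form, which the hypothesis does not cover; handling it would require the criterion for $\overline{R_n}$. Moving a Gaussian suffix to the far left by left--right symmetry only ever produces this same left-coefficient form.

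Your intermediate description $\sum_j\alpha_j(R_k)_j\beta_j$ is also inaccurate. For fixed $j$, the coefficient of $x_j$ is a sum $\sum_S L_S\,x_j\,M_{S,j}$ over the column sets $S$ used by the prefix. Such a map is in general not a scaled rotation of $\Hh\simeq\R^4$; for example, $x\mapsto x+\mathbf i x\mathbf i$ kills $x=1$. So the claimed Gaussian-side law $\gamma_{1,4}\sum_j|\alpha_j|^2|\beta_j|^2$ is not available either.

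\textbf{The missing idea.} Replace rows top-down, keeping the row being replaced leftmost. Write the current quantity as $P_k[a]=\sum_SP_k(S)a_S$. Here $P_k(S)$ is the row-ordered permanent of rows $k,\dots,n$ on column set $S$, and the coefficients $a_S$ sit on the right, independent of $R_k,\dots,R_n$.

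Then $P_k[a]=\sum_jR_{k,j}D_j$ with $D_j=\sum_{S\ni j}P_{k+1}(S\setminus\{j\})a_S$. So the criterion applies conditionally and replaces $R_k$ by $\sigma_kg_k$.

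The key step is quaternionic left--right symmetry. Conditional on the $D_j$, $\sum_jg_{k,j}D_j\stackrel{\mathrm d}=\sum_jD_jh_j$ for fresh standard Gaussians $h_j$, since both are centered Gaussians on $\R^4$ with covariance $\frac14\sum_j|D_j|^2I_4$. This moves the new Gaussian entries to the far right. There they are absorbed into coefficients $a'_T=\sigma_k\sum_{j\notin T}a_{T\cup\{j\}}h_j$, giving $\sigma_k\sum_jD_jh_j=P_{k+1}[a']$ with row $k+1$ now leftmost.

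This is the legitimate version of your ``move row $k$ to the front'' idea: it is the already-Gaussian prefix, not the row being replaced, that gets moved. Start from $a_{\{1,\dots,n\}}=1$ and iterate. With Gaussian rows, every step is an equality and yields the same terminal coefficients. This gives
$\E e^{-s|\per_{\Hh}A|^2}\le\E e^{-s\prod_i\sigma_i^2|\per_{\Hh}G_n^{\Hh}|^2}$.
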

	
	Independence of the rows and the isotropy condition give
	\[
	\E|\per_{\mathbb K}A|^2
	=n!\prod_{i=1}^n\sigma_i^2,
	\]
	so the normalization in \eqref{eq:independent-row-laplace-comparison} is the exact
	second-moment scale.
	
	\begin{proof}
		First suppose $\mathbb K\in\{\R,\C\}$.  The second condition in
		\eqref{eq:row-projection-criterion} is equivalent to
		\[
		\E e^{-s|R_ic|^2}
		\le
		\E e^{-s\sigma_i^2|g_ic|^2}
		\qquad(c\in\mathbb K^n,\ s\ge0),
		\]
		where $g_i$ is a standard $\mathbb K$-Gaussian row.  For $0\le k\le n$,
		let $A^{(k)}$ have row $\sigma_i g_i$ for $i\le k$ and row $R_i$ for
		$i>k$.  For $k\ge 1$, condition on every row except row $k$.
		If $C^{(k)}$ is the resulting
		permanental cofactor column, then
		\[
		\per A^{(k-1)}=R_kC^{(k)},
		\qquad
		\per A^{(k)}=\sigma_k g_kC^{(k)}.
		\]
		Applying the projection comparison conditionally and iterating over the
		rows gives
		\[
		\E e^{-s|\per A|^2}
		\le
		\E e^{-s(\prod_{i=1}^n\sigma_i^2)|\per G_n|^2}.
		\]
		The same row-replacement comparison over $\Hh$ is proved in
		Subsection~\ref{subsec:quaternion-row-replacement}.  Thus, in every field,
		normalization gives
		$X_n^{\mathbb K}\le_{\mathrm{Lt}}
		|\per_{\mathbb K}A|^2/(n!\prod_{i=1}^n\sigma_i^2)$.
		The first comparison in \eqref{eq:independent-row-laplace-comparison} is
		\eqref{eq:determinant-permanent-comparison} over $\R$ and $\C$, and
		Theorem~\ref{thm:quaternion-laplace} over $\Hh$.
		
		The small-ball estimate is
		\eqref{eq:laplace-chernoff-small-ball} applied with
		$Y=|\per_{\mathbb K}A|^2/(n!\prod_{i=1}^n\sigma_i^2)$.
	\end{proof}
	
	If the projection comparison held with a scale $\tau_i^2$ in place of
	$\sigma_i^2$, comparison of right derivatives at zero would give
	$\tau_i^2\le \E|R_ic|^2/\|c\|_2^2=\sigma_i^2$.  Thus $\sigma_i^2$ is the largest
	admissible Gaussian scale.
	
	There is a simple radial source of rows satisfying
	\eqref{eq:row-projection-criterion}.
	
	\begin{proposition}[Radial rows]
		\label{prop:radial-rows}
		Let $U$ be uniform on the unit sphere of $\mathbb K^n$, let $Q\ge0$ be
		independent of $U$, and suppose that
		\[
		\E Q=n,
		\qquad
		\gamma_{n,\beta}\le_{\mathrm{Lt}}Q.
		\]
		Then $R=\sigma\sqrt Q\,U$ satisfies
		\eqref{eq:row-projection-criterion} at scale $\sigma^2$.
	\end{proposition}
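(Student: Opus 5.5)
We verify the two conditions in \eqref{eq:row-projection-criterion} separately, using only the radial structure of $R$. For the isotropy condition, note that $U$ uniform on the unit sphere of $\mathbb K^n\simeq\R^{\beta n}$ has law invariant under unitary maps, so $\E U^*U$ is a scalar multiple of $I_n$. Taking the trace gives $\E U^*U=\frac1n I_n$, since $\|U\|_2=1$. Independence of $Q$ and $U$ together with $\E Q=n$ then yields $\E R^*R=\sigma^2\E Q\,\E U^*U=\sigma^2 I_n$.

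For the projection condition, fix $c\ne0$. By unitary invariance of $U$ (and, over $\Hh$, invariance under right multiplication by unit quaternions), the scalar $Uc/\|c\|_2$ has the law of a single coordinate $U_1$. Hence
\[
\frac{|Rc|^2}{\sigma^2\|c\|_2^2}\stackrel{\mathrm d}=Q\,|U_1|^2,
\]
with $Q$ and $U_1$ independent. Now realize the Gaussian side in the same form. Let $g$ be a standard $\mathbb K$-Gaussian vector in $\mathbb K^n$. Then $g=\|g\|_2\,U'$ with $U'$ uniform on the sphere and independent of $\|g\|_2^2\stackrel{\mathrm d}=\gamma_{n,\beta}$. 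Therefore $\gamma_{1,\beta}\stackrel{\mathrm d}=|g_1|^2\stackrel{\mathrm d}=\gamma_{n,\beta}|U_1|^2$ with independent factors.

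It remains to show that multiplying the hypothesis $\gamma_{n,\beta}\le_{\mathrm{Lt}}Q$ by an independent nonnegative factor $V=|U_1|^2$ preserves Laplace order. Conditioning on $V=v$, we have
\[
\E e^{-s\gamma_{n,\beta}v}\ge\E e^{-sQv}
\]
for each $v\ge0$, because $sv\ge0$ is an admissible test parameter. Averaging over $V$ gives $\E e^{-s\gamma_{n,\beta}V}\ge \E e^{-sQV}$, that is,
\[
\gamma_{1,\beta}\le_{\mathrm{Lt}} |Rc|^2/(\sigma^2\|c\|_2^2).
\]
This is the same closure property already used in the proofs of Theorems~\ref{thm:determinantal-cofactor-comparison} and~\ref{thm:quaternion-laplace}.

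The only delicate step is the distributional identity $Uc/\|c\|_2\stackrel{\mathrm d}=U_1$ in the quaternionic case. There one needs a unitary $V$ with $V e_1=c/\|c\|_2$, and must check that $Uc=(UV)e_1\|c\|_2$ with $UV\stackrel{\mathrm d}=U$. This holds once we note that quaternionic unitaries act as orthogonal maps on $\R^{4n}$, which preserve the uniform sphere measure, and that any unit vector extends to a quaternionic orthonormal basis. I expect this bookkeeping about left versus right scalar conventions to be the main point requiring care; the rest is immediate.
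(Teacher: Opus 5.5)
Your proposal is correct and follows essentially the paper's argument: the radial decomposition $g_n\stackrel{\mathrm d}=\sqrt{\gamma_{n,\beta}}\,U$, then conditioning on the direction and applying the Laplace order to the radius. Your reduction of $Uc/\|c\|_2$ to a single coordinate $U_1$, the step you flag as delicate, is valid but unnecessary. The paper conditions on $U$ directly and compares $\E_Q e^{-s\sigma^2Q|Uc|^2}$ with $\E e^{-s\sigma^2\gamma_{n,\beta}|Uc|^2}=\E e^{-s\sigma^2|g_nc|^2}$, which avoids the quaternionic left/right bookkeeping entirely.
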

	
	\begin{proof}
		Rotational invariance gives $\E U^*U=I_n/n$, so
		$\E R^*R=\sigma^2I_n$.  A standard $\mathbb K$-Gaussian row has the
		radial decomposition
		\[
		g_n\stackrel{\mathrm d}=\sqrt{\gamma_{n,\beta}}\,U,
		\]
		with independent radius and direction.  Therefore, for every
		$c\in\mathbb K^n$ and $s\ge0$,
		\[
		\E e^{-s|Rc|^2}
		=\E_U\E_Qe^{-s\sigma^2Q|Uc|^2}
		\le
		\E_U\E_{\gamma_{n,\beta}}
		e^{-s\sigma^2\gamma_{n,\beta}|Uc|^2}
		=\E e^{-s\sigma^2|g_nc|^2}.
		\]
	\end{proof}
	
	\begin{remark}[Examples]
		For a mean-one nonnegative random variable $V$, use the
		reliability-theoretic notation $V\in\mathcal L_\alpha$ when
		$\operatorname{Gamma}(\alpha,1/\alpha)\le_{\mathrm{Lt}}V$
		\cite{Klefsjo1983,Lin1998,Klar2002}.  Thus the second condition in
		\eqref{eq:row-projection-criterion} says that every normalized projected power
		belongs to $\mathcal L_{\beta/2}$, while the hypothesis of
		Proposition~\ref{prop:radial-rows} says equivalently
		that $Q/n$ belongs to $\mathcal L_{\beta n/2}$.
		
		The proposition includes the following natural examples.
		
		If $Q=n$, then $\sqrt n\,U$ is the normalized uniform sphere row; the
		Laplace comparison follows immediately from Jensen's inequality.  If $B$
		is uniform on the unit ball, then
		$\sqrt{n+2/\beta}\,B$ also satisfies the proposition.  Indeed, write
		$B=\sqrt T\,U$ with
		$T\sim\operatorname{Beta}(\beta n/2,1)$.  For an independent
		$S\sim\operatorname{Gamma}(\beta n/2+1,2/\beta)$,
		the beta--gamma identity gives
		$TS\stackrel{\mathrm d}=\gamma_{n,\beta}$ and
		$\E S=n+2/\beta$; conditional Jensen proves the required order.
		
		More generally, if
		\[
		Q\sim\operatorname{Gamma}(a,n/a),
		\qquad a\ge\frac{\beta n}{2},
		\]
		then direct comparison of Laplace transforms proves the hypothesis.  This
		family interpolates between the Gaussian radius and the sphere.  Another
		large family is obtained from any mean-preserving contraction of the
		Gaussian squared radius: if
		$Q=\E[\gamma_{n,\beta}\mid\mathcal G]$, then conditional Jensen gives
		$\gamma_{n,\beta}\le_{\mathrm{Lt}}Q$.
	\end{remark}
	
	\subsection{Quaternionic row replacement}
	\label{subsec:quaternion-row-replacement}
	
	We prove the quaternionic row-replacement statements invoked in the proofs of
	Theorems~\ref{thm:independent-row-comparison}
	and~\ref{thm:gaussian-perturbation}.  The only additional issue is
	that replacing an interior row places quaternionic coefficients on both sides
	of its entries.
	
	\begin{lemma}[Quaternionic Gaussian left-right symmetry]
		\label{lem:quaternion-gaussian-left-right}
		For fixed $d_1,\ldots,d_p\in\Hh$ and independent standard quaternionic
		Gaussians $g_1,\ldots,g_p,h_1,\ldots,h_p$,
		\[
		\sum_{j=1}^pg_jd_j
		\stackrel{\mathrm d}=
		\sum_{j=1}^pd_jh_j.
		\]
	\end{lemma}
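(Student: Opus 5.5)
Both sides are centered Gaussian vectors in $\Hh\simeq\R^4$: each is a finite sum of real-linear images of independent standard Gaussian vectors. Two centered Gaussian vectors have the same law exactly when their real covariance matrices agree. So the plan is to compute both covariances and show each equals a multiple of the identity, with the same multiple. No further structure should be needed.

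For a single term, fix $d\in\Hh$ and a standard quaternionic Gaussian $g$, whose real covariance is $I_4/4$. Right multiplication $x\mapsto xd$ is a real-linear map $M_d$ on $\R^4$ with $|xd|=|x||d|$, so $M_d=|d|O$ for some orthogonal $O$, as the footnote in Proposition~\ref{prop:compression} observes. Hence $gd$ has covariance
\[
M_d(I_4/4)M_d^T=|d|^2I_4/4 .
\]
Symmetrically, left multiplication $x\mapsto dx$ is also $|d|$ times an orthogonal map, so $dh$ has covariance $|d|^2I_4/4$ as well.

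The summands on each side are independent, so covariances add. Both $\sum_j g_jd_j$ and $\sum_j d_jh_j$ therefore have covariance $\bigl(\sum_j|d_j|^2\bigr)I_4/4$. Since both are centered Gaussian, they are equal in distribution, and in fact each has the law of $\bigl(\sum_j|d_j|^2\bigr)^{1/2}g$ for a standard quaternionic Gaussian $g$.

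The only point needing care is that left and right multiplication by a quaternion are each scaled isometries of $\R^4$. This follows from multiplicativity of the quaternion norm, $|xd|=|x||d|=|dx|$, together with real-linearity, so I do not expect a real obstacle. This lemma is then what lets the quaternionic row replacement treat an interior row, whose entries carry quaternionic coefficients on both sides, as if all coefficients sat on one side.
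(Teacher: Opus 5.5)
Your proposal is correct and is the same argument as the paper's, which simply notes that both sides are centered real Gaussian vectors in $\R^4$ with covariance $\frac14\sum_j|d_j|^2I_4$. You additionally supply the details the paper leaves implicit: left and right multiplication by $d_j$ are $|d_j|$ times orthogonal maps, and covariances of independent summands add.
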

	
	\begin{proof}
		Both sides are centered real Gaussian vectors with covariance
		$\frac14\sum_j|d_j|^2I_4$.
	\end{proof}
	
	For a quaternionic matrix with rows $R_1,\ldots,R_n$, let
	\[
	\mathcal S_k
	=\{S\subseteq\{1,\ldots,n\}:|S|=n-k+1\}
	\qquad(1\le k\le n+1)
	\]
	and, for $S\in\mathcal S_k$, define
	\[
	P_k(S)=
	\sum_{\substack{\pi:\{k,\ldots,n\}\to S\\
			\pi\ {\rm bijective}}}
	R_{k,\pi(k)}\cdots R_{n,\pi(n)},
	\qquad
	P_{n+1}(\varnothing)=1.
	\]
	Thus $P_1(\{1,\ldots,n\})=\per_{\Hh}A$ and
	\[
	P_k(S)=\sum_{j\in S}R_{k,j}P_{k+1}(S\setminus\{j\}).
	\]
	For coefficients $a=(a_S)_{S\in\mathcal S_k}$, write
	$P_k[a]=\sum_{S\in\mathcal S_k}P_k(S)a_S$.
	
	\begin{lemma}[One-step suffix contraction]
		\label{lem:quaternion-suffix-contraction}
		Suppose row $R_k$ satisfies
		\[
		\E e^{-s|R_kc|^2}
		\le \E e^{-s\sigma_k^2|g_kc|^2}
		\qquad(c\in\Hh^n,\ s\ge0),
		\]
		where $g_k$ is a standard quaternionic Gaussian row.  Let
		$a=(a_S)_{S\in\mathcal S_k}$ be random coefficients independent of
		$R_k,\ldots,R_n$.  Conditional on $R_{k+1},\ldots,R_n$ and $a$, fresh
		standard quaternionic Gaussians define coefficients
		$a'=(a'_T)_{T\in\mathcal S_{k+1}}$, independent of the remaining rows, such
		that
		\[
		\E_{R_k}e^{-s|P_k[a]|^2}
		\le
		\E e^{-s|P_{k+1}[a']|^2}
		\qquad(s\ge0).
		\]
		If $R_k=\sigma_kg_k$ is Gaussian, equality holds.
	\end{lemma}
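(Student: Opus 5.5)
The plan is to rewrite $P_k[a]$ as a single row acting on a column, apply the hypothesis on $R_k$ conditionally, and then use Lemma~\ref{lem:quaternion-gaussian-left-right} to move the Gaussian factors to the right of the suffix permanents.

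\emph{Step 1: linearization in $R_k$.} Using the recursion $P_k(S)=\sum_{j\in S}R_{k,j}P_{k+1}(S\setminus\{j\})$, expand
\[
P_k[a]=\sum_{S\in\mathcal S_k}\sum_{j\in S}R_{k,j}P_{k+1}(S\setminus\{j\})a_S
=\sum_{j=1}^nR_{k,j}c_j=R_kc.
\]
Here the column $c\in\Hh^n$ has entries
\[
c_j=\sum_{S\in\mathcal S_k,\ S\ni j}P_{k+1}(S\setminus\{j\})\,a_S .
\]
The only care needed is the multiplication order. $R_{k,j}$ sits to the left of everything else in each term, so grouping by $j$ is legitimate even over $\Hh$. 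The vector $c$ is a function of $R_{k+1},\dots,R_n$ and $a$ alone, and these are independent of $R_k$.

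\emph{Step 2: replace $R_k$ by a Gaussian.} Condition on $R_{k+1},\dots,R_n$ and $a$, so that $c$ is fixed. Applying the hypothesis with this $c$ gives
\[
\E_{R_k}e^{-s|R_kc|^2}\le\E_{g_k}e^{-s\sigma_k^2|g_kc|^2}.
\]
When $R_k=\sigma_kg_k$ this is an equality, which is where the equality case of the lemma comes from.

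\emph{Step 3: move the Gaussians to the right.} With $c$ still fixed, Lemma~\ref{lem:quaternion-gaussian-left-right} with $d_j=c_j$ shows that $g_kc=\sum_jg_{k,j}c_j$ has the same law as $\sum_jc_jh_j$, where $h_1,\dots,h_n$ are fresh independent standard quaternionic Gaussians. Only this conditional law is needed, since we take the Laplace transform of $|\cdot|^2$. Substituting the formula for $c_j$ and reindexing by $T=S\setminus\{j\}\in\mathcal S_{k+1}$ gives
\[
\sigma_k\sum_jc_jh_j=\sum_{T\in\mathcal S_{k+1}}P_{k+1}(T)\,a'_T,
\qquad
a'_T:=\sigma_k\sum_{j\notin T}a_{T\cup\{j\}}\,h_j .
\]
This reindexing is valid because $a_S$ sits to the right of $P_{k+1}$ and $h_j$ sits to the right of $a_S$, so the quaternion order is preserved.

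\emph{Step 4: independence and conclusion.} The coefficients $a'$ are built only from $a$ and the fresh $h$. Both are independent of $R_{k+1},\dots,R_n$, so $a'$ is independent of the remaining rows, as the statement requires. Combining Steps 2 and 3, and then averaging over $h$ while holding $R_{k+1},\dots,R_n$ and $a$ fixed, gives
\[
\E_{R_k}e^{-s|P_k[a]|^2}\le\E e^{-s|P_{k+1}[a']|^2}.
\]
This inequality holds conditionally on $R_{k+1},\dots,R_n$ and $a$. In the Gaussian case every step is an equality.

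The only step I expect to need care is Step 3. There the left–right Gaussian symmetry must be used conditionally, with $c$ frozen, and one must check that after substitution the Gaussians land to the right of the suffix products $P_{k+1}(T)$. That is exactly what lets the argument iterate over $k$.
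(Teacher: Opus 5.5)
Your proposal is correct and follows the paper's proof essentially verbatim. You linearize $P_k[a]=\sum_j R_{k,j}D_j$ (your $c_j$ is the paper's $D_j$), apply the projection hypothesis conditionally, swap sides with Lemma~\ref{lem:quaternion-gaussian-left-right}, and regroup via $a'_T=\sigma_k\sum_{j\notin T}a_{T\cup\{j\}}h_j$; you also spell out the quaternion-ordering and independence checks that the paper leaves implicit.
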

	
	\begin{proof}
		Conditional on the remaining rows and $a$, put
		\[
		D_j=
		\sum_{\substack{S\in\mathcal S_k\\j\in S}}
		P_{k+1}(S\setminus\{j\})a_S.
		\]
		Then $P_k[a]=\sum_jR_{k,j}D_j$.  The projection comparison and
		Lemma~\ref{lem:quaternion-gaussian-left-right} give
		\[
		\E_{R_k}e^{-s|P_k[a]|^2}
		\le
		\E_h e^{-s\sigma_k^2|\sum_jD_jh_j|^2}.
		\]
		For $T\in\mathcal S_{k+1}$, define
		\[
		a'_T=\sigma_k\sum_{j\notin T}a_{T\cup\{j\}}h_j.
		\]
		Regrouping shows that
		$\sigma_k\sum_jD_jh_j=P_{k+1}[a']$, proving the claim.
	\end{proof}
	
	\begin{proof}[Quaternionic row-replacement proofs]
		For Theorem~\ref{thm:independent-row-comparison}, start with the single
		coefficient $a^{(1)}_{\{1,\ldots,n\}}=1$ and apply
		Lemma~\ref{lem:quaternion-suffix-contraction} successively.  The fresh
		Gaussians ensure at each step that the new coefficients are independent of
		the remaining rows.  Hence
		\[
		\E e^{-s|\per_{\Hh}A|^2}
		\le
		\E e^{-s|P_{n+1}[a^{(n+1)}]|^2}.
		\]
		Applying the same construction to rows $\sigma_i g_i$ makes every step an
		equality and gives the same terminal coefficients.  Since the $\sigma_i$
		are positive real scalars,
		\[
		\E e^{-s|\per_{\Hh}A|^2}
		\le
		\E e^{-s(\prod_{i=1}^n\sigma_i^2)|\per_{\Hh}G_n^{\Hh}|^2},
		\]
		as required.
		
		For Theorem~\ref{thm:gaussian-perturbation}, condition on $Z$.  At the
		first suffix step, write
		$\per_{\Hh}(Z+G_n^{\Hh})-w=\sum_jg_{1,j}D_j+b$, where
		$b=\sum_jZ_{1,j}D_j-w$.  The noncentral Gaussian formula
		\eqref{eq:noncentral-gaussian-kernel} removes $b$, and
		Lemma~\ref{lem:quaternion-gaussian-left-right} performs the first suffix
		contraction.  Each later row $Z_i+g_i$ satisfies the same centered
		projection comparison by \eqref{eq:noncentral-gaussian-kernel}, so the
		remaining suffix contractions end at the standard quaternionic Ginibre
		permanent.  This proves
		\eqref{eq:gaussian-perturbation-laplace}.
	\end{proof}
	
	\section*{Acknowledgments}
	The authors discussed ideas with ChatGPT and used Codex to assist with writing.
	We thank Claude for introducing us to \cite{GenestOuimetRichards2024}
	during a discussion of Laplace-transform order.
	The authors are responsible for all errors.

	\bibliographystyle{alpha}
	\bibliography{references}
	
\end{document}